\title{Generalizations of Bilinear Maps\\ {\small Technical Report}}
\author{Tom\'a\v s Jakl and Dan Marsden and Nihil Shah}
\newcommand{\df}[1]{\emph{#1}}
\newcommand{\nt}{\Rightarrow}
\newcommand{\st}{\mathsf{st}}
\newcommand{\stp}{\mathsf{st}'}
\newcommand{\dst}{\mathsf{dst}}
\newcommand{\dstp}{\mathsf{dst}'}
\newcommand{\monad}[1]{\mathbb{#1}}
\newcommand{\MT}{\monad{T}}
\newcommand{\MS}{\monad{S}}
\newcommand{\MC}{\monad{C}}
\newcommand{\MD}{\monad{D}}
\newcommand{\ML}{\monad{L}}
\newcommand{\MM}{\monad{M}}
\newcommand{\category}[1]{\mathcal{#1}}
\newcommand{\CV}{\category{V}}
\newcommand{\CC}{\category{C}}
\newcommand{\CD}{\category{D}}
\newcommand{\CSet}{\mathsf{Set}}
\newcommand{\FT}{F}
\newcommand{\alg}[1]{\mathsf{Alg}(#1)}
\newcommand{\coalg}[1]{\mathsf{CoAlg}(#1)}
\newcommand{\klc}[2]{#2_{#1}}
\newcommand{\emc}[2]{#2^{#1}}
    \newcommand{\coemc}[2]{#2^{#1}}
\newcommand{\klf}[2]{#1_{#2}}
\newcommand{\emf}[2]{#1^{#2}}
\newcommand{\emklf}[2]{\widehat{#1_{#2}}} 
\newcommand{\klfree}[1]{{F_{#1}}}
\newcommand{\eilmofree}[1]{F^{#1}}
    \newcommand\klfr{\klfree}
    \newcommand\emfr{\eilmofree}
\newcommand{\eilmoforget}[1]{U^{#1}}
\newcommand{\kleiforget}[1]{U_{#1}}
    \newcommand\emfg{\eilmoforget}
    \newcommand\klfg{\kleiforget}
\newcommand{\klar}[1]{\rightarrow_{#1}}
\newcommand{\emar}[1]{\rightarrow^{#1}}
\newcommand{\id}{\mathsf{id}}
\newcommand{\Id}{\mathsf{Id}}
\theoremstyle{plain}
\newtheorem{theorem}{Theorem}
\newtheorem{proposition}[theorem]{Proposition}
\theoremstyle{definition}
\newtheorem{definition}[theorem]{Definition}
\newtheorem{example}[theorem]{Example}
\theoremstyle{remark}
\newtheorem{remark}[theorem]{Remark}
\numberwithin{theorem}{section}
\begin{document}

\maketitle

\begin{abstract}
Bilinear maps and their classifying tensor products are well-known in the theory
of linear algebra, and their generalization to algebras of commutative
monads is a classical result of monad theory. Motivated by constructions needed in categorical
approaches to finite model theory, we generalize the notion of bimorphism much further.
To illustrate these maps are mathematically natural notions, we show that many common axioms in category
theory can be phrased as certain morphisms being bimorphisms.
We also show that much of the established theory of bimorphisms goes through in much greater generality. 
Our results carefully identify which assumptions are needed for the different components of the theory,
including when good properties hold globally, or can at least be established locally.
We include a brief string diagrammatic account of the bimorphism axiom, and conclude by recovering a simple
proof of a classical theorem, emphasizing the efficacy of the bimorphism perspective.
\end{abstract}

\tableofcontents

\section{Introduction}
In linear algebra, the operation of tensor product $\otimes$ is characterised by the universal property that bilinear maps of type $U \times V \rightarrow W$, involving vector spaces $U,V$ and $W$, are in bijective correspondence with linear maps of type $U \otimes V \rightarrow W$. (See for example~\cite{aluffi2021algebra} for an expository account in categorical language. )

The tensor product in vector spaces can been seen as a lifting of the cartesian product over $\CSet$ to algebras over the free vector space monad.
Under mild assumptions, this situation generalizes to any commutative monad $\MT$ by defining a lifting of a monoidal product on a category $\CC$ to a monoidal product on the category of algebras $\emc{\MT}{\CC}$.
The definition of commutative monad involves a lot of assumptions, but the notion of bilinear map, and the corresponding classifying object construction make no essential use of many of them.


In this paper, we identify a minimal notion of bimorphism, generalizing the classical theory of bilinear maps in the setting of monads~\cite{kock1971bilinearity} (see also \cite{jacobs1994semantics,seal2012tensors}). At this greater level of generality, bimorphisms subsume ordinary algebra morphisms. Furthermore, many axioms of classical monad theory, such as those for being a monad, or an Eilenberg-Moore algebra can be phrased as certain maps being bimorphisms. We establish that classifying objects for bimorphisms exist under very mild assumptions, and carefully identify the axioms required for good properties of the classifying object construction, such as functoriality or commuting with the free algebra functors.

Though the lifting of monoidal products to the category of algebras has been studied extensively, one of our main technical contribution is in extracting the minimal conditions under which various liftings of functors to endofunctor algebras exist. 
Another contribution is demonstrating, that in practice, a global assumption that all diagrams of a certain type commute
can be weakened to a local assumption, that the diagrams at a particular index commute.
For example, existence of a natural transformation can be weakened to commutativity of specific naturality squares. 
We also provide a string diagrammatic proof of a well-known adjoint lifting theorem~\cite{johnstone1975adjoint, keigher1975adjunctions} which utilises the theory of bimorphisms.   
Underlying this entire presentation, is the key unifying idea of a $\lambda$-bimorphism between endofunctor algebras.

Our main motivation for investigating bimorphisms is to study Feferman-Vaught-Mostowski (FVM) theorems~\cite{mostowski1952direct, feferman1967first}. Such theorems characterize how logical equivalence behaves under composition and transformation of models. They have applications ranging from model theory~\cite{gurevich1985monadic} to algorithmic meta-theorems~\cite{courcelle1990monadic, courcelle2000linear, courcelle2012graph, oum2006approximating, makowsky2004algorithmic}.

An FVM theorem typically states that, for a logic $\mathcal L$, and~$n$-ary operation~$H$ on models, the $\mathcal{L}$-theory of $H(A_1,\dots,A_n)$ can be computed from the $\mathcal{L}$-theories of $A_1,\dots,A_n$.
In our forthcoming paper~\cite{jaklmarsdenshah2022fvm} we make use of the technology built here, and demonstrate that in the setting of game comonads~\cite{AbramskyDW17, abramsky2021relating} the core of Feferman-Vaught-Mostowski theorems is lifting~$H$ to the Eilenberg-Moore category of a comonad capturing $\mathcal L$, and analysing  properties of bimorphisms. 
Our emphasis on identifying the minimal technical considerations is motivated by practical considerations in that work.
We focus on monads in the present work as the classical motivating example is algebraic in nature.

\section{Preliminaries}
We assume familarity of the standard category-theoretic notions of functor, natural transformations, adjunctions, and (co)monads. 
These notions can be found in any introductory writing on the subject (see e.g.\ \cite{mac2013categories})
For an endofunctor $T:\CC \rightarrow \CC$, the category of $T$-algebras is denoted $\alg{T}$ and the category of $T$-coalgebras is denoted $\coalg{T}$. 
For a monad $\MT = (T,\mu^\MT,\eta^\MT)$ over $\CC$, the \textit{Eilenberg-Moore category of $\MT$} is denoted $\emc{\MT}{\CC}$. 
The \textit{Kleisli category of $\MT$} is denoted $\klc{\MT}{\CC}$.
Recall that $\klc{\MT}{\CC}$ and $\emc{\MT}{\CC}$ are the initial and terminal object in the category of adjunctions which yield $\MT$ as monad over $\CC$.
In particular, there is a comparison functor $K:\klc{\MT}{\CC} \rightarrow \emc{\MT}{\CC}$ which maps the Kleisli adjunction $\klfg{\MT} \dashv \klfr{\MT} $ to the Eilenberg-Moore adjunction $\emfr{\MT} \dashv \emfg{\MT}$ associated to $\MT$ as described in figure~\ref{diag:mt-comparison}. 
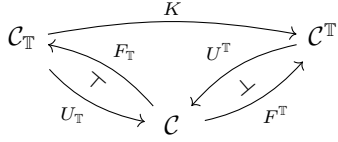
\begin{figure}
    \centering
    \begin{tikzcd}[column sep=4em]
        \klc{\MT}{\CC}
            \ar[bend right=18,swap, shorten <= 0.5em, shorten >= 0.3em]{dr}{\klfg{\MT}}
            \ar[bend left=10]{rr}{K}
        &
        & \emc{\MT}{\CC} 
            \ar[bend right=18,swap]{dl}{\emfg{\MT}}
        \\
        & \CC
            \ar[bend right=18,swap]{ul}{\klfr{\MT}}
            \ar[phantom,sloped,pos=0.55]{ul}{\rotatebox{80}{$\dashv$}}
            \ar[bend right=18,swap, shorten <= 0.5em, shorten >= 0.3em]{ur}{\emfr{\MT}}
            \ar[phantom,sloped]{ur}{\rotatebox{-80}{$\dashv$}}
    \end{tikzcd}

    \caption{The Kleisli and Eilenberg-Moore adjunctions for a monad}
    \label{diag:mt-comparison}
\end{figure}

In an effort to declutter notation, with natural transformations $\lambda:T \nt S$, we will often write $\lambda$ for the individual components $\lambda_A : T(A) \rightarrow S(A)$ when it does not lead to confusion.  
We will also write $\mu$ for $\mu^{\MT}$ when it does not lead to confusion and similarly for $\eta$. 
In section~\ref{sec:comonads}, we will work with comonads $\MD = (D,\delta^\MD,\epsilon^\MD)$ where will employ the same terminological and notational conventions.

\section{Bilinear maps}
For real vector spaces~$U,V,W$, a standard algebraic notion is that of a bilinear map. We say
that a function~$h : U \times V \rightarrow W$ is \emph{bilinear} if:
\begin{enumerate}
    \item For all~$u \in U$, $h(u,-)$ is a linear map~$V \rightarrow W$.
    \item For all~$v \in V$, $h(-,v)$ is a linear map~$U \rightarrow W$.
\end{enumerate}
Furthermore, there is a vector space~$U \otimes V$, the \df{tensor product} of~$U$ and~$V$, such that there is a bijective correspondence between:
\begin{enumerate}
    \item Bilinear maps~$U \times V \rightarrow W$.
    \item Linear maps~$U \otimes V \rightarrow W$.
\end{enumerate}
This universal property of the tensor product is very convenient, as it allows us to move freely between the perspective of bilinear maps, and that of linear maps, for which there is a great deal of linear algebraic machinery.

It is a classical fact of monad theory that this situation generalizes to many other settings~\cite{kock1971bilinearity}, particularly to that of commutative monads (See also~\cite{jacobs1994semantics, seal2012tensors}). In this paper, we explore this underappreciated fact in some detail. We establish connections with so-called Kleisli laws, and use this to show that the notion of bilinear map can be generalized yet further, with a corresponding universal object. We show that the abstract formulation of bilinear maps captures a wide range of familiar notions from mathematics and computer science.

\section{Generalizing Bilinear maps to commutative monads}
This section discusses some classical categorical results relating to bilinear maps.
We must first introduce the notions of strong and commutative monads. The material in this section is standard, but provides motivation for later constructions, and gives us a convenient opportunity to fix some terminology and notation.

\subsection{Strong and Commutative Monads}
Let~$(\mathcal{V}, \otimes, I)$ be a symmetric monoidal category, with coherence isomorphisms:
\newcommand{\leftunitor}{\mathsf{l}}
\newcommand{\rightunitor}{\mathsf{r}}
\newcommand{\associator}{\mathsf{a}}
\newcommand{\symmetry}{\mathsf{s}}
\begin{align*}
    \leftunitor : I \otimes A &\rightarrow A \\
    \rightunitor : A \otimes I &\rightarrow A \\
    \associator : (A \otimes B) \otimes C &\rightarrow A \otimes (B \otimes C)\\
    \symmetry : A \otimes B &\rightarrow B \otimes A
\end{align*}
(For background on symmetric monoidal categories, see for example~\cite{mac2013categories}).

A \df{strength} for an endofunctor~$\FT : \mathcal{V} \rightarrow \mathcal{V}$ is a natural transformation:
\[
    \st : A \otimes \FT(B) \rightarrow \FT(A \otimes B)
\]
such that the following two diagrams commute:
\[
\begin{tikzcd}[column sep=0.5em]
    I \otimes \FT(A) \ar{rr}{\st} \drar[swap]{\leftunitor} & & \FT(I \otimes A) \ar{dl}{F(\leftunitor)} \\
    & \FT(A)
\end{tikzcd}
\]
\[
\begin{tikzcd}
(A \otimes B) \otimes \FT(C) \arrow[rr, "\st"] \dar[swap]{\associator} & & \FT((A \otimes B) \otimes C) \dar{\FT(\associator)} \\
A \otimes (B \otimes \FT(C)) \rar[swap]{A \otimes \st} & A \otimes \FT(B \otimes C) \rar[swap]{\st} & \FT(A \otimes (B \otimes C))
\end{tikzcd}  
\]

Dually, a \df{costrength} for~$\FT$ is a natural transformation:
\[
    \stp : \FT(A) \otimes B \rightarrow \FT(A \otimes B)
\]
satisfying the dual of the conditions above. If~$\CV$ is a symmetric monoidal category, we can define a costrength from a strength using the symmetry~$\symmetry$ as the composite:
\[
\begin{tikzcd}
\FT(A) \otimes B \rar{\symmetry} & B \otimes \FT(A) \rar{\st} & \FT(B \otimes A) \rar{\FT(\symmetry)} & \FT(A \otimes B) 
\end{tikzcd}
\]
A \df{strong functor} is a functor with a specified strength.

A monad~$(\MT,\eta,\mu)$ is~\df{strong}~\cite{Kock1970} if $\MT$ is a strong functor, such that the strength is compatible with the monad structure, in that the following diagrams commute:
\[
\begin{tikzcd}[column sep=0.5em]
    A \otimes \MT(B) \ar{rr}{\st} & & \MT(A \otimes B) \\
    & A \otimes B \ar{ul}{A \otimes \eta} \ar[swap]{ur}{\eta}
\end{tikzcd}
\]
\[
\begin{tikzcd}
A \otimes \MT^2(B) \dar[swap]{\st} \arrow[rr, "A \otimes \mu"] & & A \otimes \MT(B) \dar{\st} \\
\MT(A \otimes \MT(B)) \rar[swap]{\MT(\st)} & \MT^2(A \otimes B) \rar[swap]{\mu} & \MT(A \otimes B)
\end{tikzcd}
\]
\begin{remark}
On a monoidal closed category, a strong monad is the same thing as an enriched monad~\cite{Kock1972}.
Every~$\CSet$ monad is~$\CSet$-enriched, and hence canonically strong, with $\st(a, t) := \MT(\lambda b. (a,b))(t)$.
\end{remark}
For a strong monad on a symmetric monoidal category, we define 
\[ \dst : \MT(A) \otimes \MT(B) \rightarrow \MT(A \otimes B)  ,\] using the induced costrength, as the composite:
\[
\begin{tikzcd}
\MT(A) \otimes \MT(B) \rar{\st} & \MT(\MT(A) \otimes B) \rar{\MT(\stp)} & \MT^2(A \otimes B) \rar{\mu} & \MT(A \otimes B)
\end{tikzcd}
\]
We can also define natural transformation 
\[ \dstp : \MT(A) \otimes \MT(B) \rightarrow \MT(A \otimes B) \]
as the composite:
\[
\begin{tikzcd}
\MT(A) \otimes \MT(B) \rar{\stp} & \MT(\MT(A) \otimes B) \rar{\MT(\st)} & \MT^2(A \otimes B) \rar{\mu} & \MT(A \otimes B)
\end{tikzcd}
\]
The morphisms~$\dst$ and~$\dstp$ are referred to as \df{double strengths}. A strong monad is said to be~\df{commutative}~\cite{Kock1970} if~$\dst = \dstp$.
\begin{remark}
    On a symmetric monoidal closed category, a commutative monad is the same thing as a symmetric monoidal monad~\cite{Kock1972}.
\end{remark}
\begin{example}
For the list monad~$\ML$, we have:
\begin{align*}
    \st(a,[b_1,\ldots,b_n]) &= [(a,b_1),\ldots,(a,b_n)] \\
    \stp([a_1,\ldots,a_n], b) &= [(a_1,b),\ldots,(a_n,b)] \\
    \dst([a_1,\ldots,a_m],[b_1,\ldots,b_m]) &= [(a_1,b_1),\ldots,(a_m,b_1),\ldots,(a_1,b_n),\ldots, (a_m,b_n)]\\
    \dstp([a_1,\ldots,a_m],[b_1,\ldots,b_m]) &= [(a_1,b_1),\ldots,(a_1,b_n),\ldots,(a_m,b_1),\ldots,(a_m,b_n)]
\end{align*}
In particular, the list monad is~\emph{not} commutative.
\end{example}

\begin{example}
If~$\MT$ is a monad induced by the presentation of some algebraic theory~$(\Sigma,E)$, $\st(a,t)$ is defined on the representative term~$t$ by replacing the occurrence of each variable~$b$ with~$(a,b)$. Dually, $\stp(t,b)$ replaces the occurrence of each variable~$a$ in~$t$ with~$(a,b)$. 

For example, if~$\Sigma = \{\times,1\}$, and the equations~$E$ are those for the theory of monoids, then we have:
\begin{align*}
    \st(a, (b_1, \times b_2) \times b_3) &= ((a,b_1) \times (a,b_2)) \times (a,b_3) \\
    \stp((a_1 \times a_2) \times a_3, b) &= ((a_1,b) \times (a_2,b)) \times (a_3,b)
\end{align*}
As we would expect, this is a syntactic rephrasing of the actions for the list monad.
\end{example}

\begin{example}
For a semiring~$S$, we can define a monad~$\MM_S : \CSet \rightarrow \CSet$:
\begin{description}
\item[Endofunctor]: $\MM_S(A)$ is the set of finite formal sums of the form:
\[
\sum_i s_i a_i
\]
Here each~$s_i$ is an element of the semiring, and each~$a_i$ is in~$A$.
\item[Unit]: The unit maps~$a$ to the trivial sum.
\item[Multiplication]: The multiplication flattens a sum of sums to a sum:
\[
\sum_i s_i (\sum_{j_i} s_{i,j} a_{i,j}) \mapsto \sum_{i,j} (s_i \times s_{i,j}) a_{i,j}
\]
\end{description}
The strength and costrength are given by:
\begin{align*}
    \st(a, \sum_j s_j b_j) &= \sum_j s_j (a,b_j) \\
    \stp(\sum_i s_i a_i, b) &= \sum_i s_i (a_i,b)
\end{align*}
Therefore we have
\begin{align*}
    \dst(\sum_i s_i a_i, \sum_j r_j b_j) = \sum_{i,j} (r_j \times s_i)(a_i,b_j) \\
    \dstp(\sum_i s_i a_i, \sum_j r_j b_j) = \sum_{i,j} (s_i \times r_j)(a_i,b_j)
\end{align*}
$\MM_S$ will be a commutative monad if and only if the semiring multiplication is commutative. The algebras for this monad are known as semimodules, and modules if~$S$ is in fact a ring. We note a few special cases of interest, in each case, the corresponding monad is commutative:
\begin{itemize}
    \item If~$S = \mathbb{N}$, the algebras are abelian monoids.
    \item If~$S = \mathbb{Z}$, the algebras are abelian groups.
    \item If~$S$ is a field~$F$, the algebras are the vector spaces over~$F$.
\end{itemize}

\end{example}

\subsection{Abstracting Bilinear Maps}

For a commutative monad~$\MT$, and algebras~$(A,\alpha)$, $(B,\beta)$ and~$(C,\gamma)$, following~\cite{kock1971bilinearity}, we say that a morphism~$h : A \otimes B \rightarrow C$ is \df{bilinear}, or a~\df{bimorphism}, if the following diagram commutes:

\begin{equation}
\label{eq:commutative-monad-bimorphism-diagram}
\begin{tikzcd}
\MT(A) \otimes \MT(B) \rar{\dst} \dar[swap]{\alpha \otimes \beta} & \MT(A \otimes B) \rar{\MT(h)} & \MT(C) \dar{\gamma} \\
A \otimes B \arrow[rr, swap, "h"] & & C
\end{tikzcd}
\end{equation}

\begin{example}
If~$\MT$ is the abelian monoid monad, a function~$h : A \times B \rightarrow C$ is bilinear if it satisfies:
\[
h(\sum_i a_i, \sum_j b_j) = \sum_i \sum_j h(a_i, b_j) 
\]
This is equivalent to it being a homomorphism component-wise. That is, satisfying the equations
\[
    h(\sum_i a_i, b) = \sum_i h(a_i,b)
    \qquad\mbox{and}\qquad
    h(a, \sum_j b_j) = \sum_j h(a, b_j) .
\]
\end{example}

Bilinearity can also be phrased in terms of the left and right components separately,
using the monad strength and costrength.
\begin{equation}
\label{eq:commutative-monad-bimorphism-right-component}
\begin{tikzcd}
A \otimes \MT(B) \rar{\st} \dar[swap]{A \otimes \beta} & \MT(A \otimes B) \rar{\MT(h)} & \MT(C) \dar{\gamma} \\
A \otimes B \arrow[rr, swap, "h"] & & C
\end{tikzcd}
\end{equation}

\begin{equation}
\label{eq:commutative-monad-bimorphism-left-component}
\begin{tikzcd}
\MT(A) \otimes \MT(B) \rar{\stp} \dar[swap]{\alpha \otimes B} & \MT(A \otimes B) \rar{\MT(h)} & \MT(C) \dar{\gamma} \\
A \otimes B \arrow[rr, swap, "h"] & & C
\end{tikzcd}
\end{equation}
 A morphism~$h : A \otimes B \rightarrow C$ is bilinear in the sense that diagram~\eqref{eq:commutative-monad-bimorphism-diagram} commutes if and only if both diagrams~\eqref{eq:commutative-monad-bimorphism-right-component} and~\eqref{eq:commutative-monad-bimorphism-left-component} commute.

\begin{example}
If~$\MT$ is the monoid monad, that diagrams~\eqref{eq:commutative-monad-bimorphism-right-component} and~\eqref{eq:commutative-monad-bimorphism-left-component} commute is equivalent to equations:
\[
    h(\prod_i a_i, b) = \prod_i h(a_i,b)
    \qquad\mbox{and}\qquad
    h(a, \prod_j b_j) = \prod_j h(a, b_j) .
\]
Here, crucially, the notation intends that we preserve the order of the elements in the products.

As we know the monoid monad is not commutative, we would expect that the conditions implied by~$\dst$ and~$\dstp$ are different. This is the case, with the two properties being
\[
    h(\prod_i a_i, \prod_j b_j) = \prod_j \prod_i h(a_i,b_j)
    \qquad\mbox{and}\qquad
    h(\prod_i a_i, \prod_j b_j) = \prod_i \prod_j h(a_i,b_j) .
\]
Again, it is critical we preserve the order of the elements in the products. It is perhaps easier to see the difference on smaller examples of each condition:
\begin{align*}
    h(a \times a', b \times b') &= h(b,a) \times h(b,a') \times h(b',a) \times h(a',b') \\
    h(a \times a', b \times b') &= h(b,a) \times h(b',a) \times h(b,a') \times h(a',b')
\end{align*}
The two conditions are not equivalent, as the monoid multiplication is not in general commutative.
\end{example}

\subsection{Lifting tensor products of commutative monads}
\label{sec:lifting-tensor-products}
The following is a well-known and very useful result, see for example~\cite{jacobs1994semantics}.

\begin{theorem}
    \label{thm:lifting-smc-structure}
    If~$\MT$ is a commutative monad on a symmetric monoidal category~$(\CV,\otimes,I)$ then:
\begin{enumerate}
    \item The symmetric monoidal structure lifts to a symmetric monoidal structure $(\otimes_\MT,I_\MT)$ on~$\klc{\MT}{\CV}$.
    \item If~$\emc{\MT}{\CV}$ has coequalizers of reflexive pairs, then the symmetric monoidal structure lifts to a symmetric monoidal structure $(\otimes^\MT,I^\MT)$ on~$\emc{\MT}{\CV}$, and the full and faithful functor~$K : \klc{\MT}{\CV} \rightarrow \emc{\MT}{\CV}$ preserves the symmetric monoidal structure (in sense of \cite{jacobs1994semantics}). 
    \item Furthermore, $\otimes^\MT$ is universal, in the sense that for algebras~$(A,\alpha)$ and $(B,\beta)$ there is a bimorphism~$u : A \otimes B \rightarrow (\alpha \otimes^\MT \beta)$ such that for every coalgebra~$(C,\gamma)$, every bimorphism~$h : A \otimes B \rightarrow C$ factors through a unique~$\hat{h}$ as:
    \[
    \begin{tikzcd}
        A \otimes B \drar[swap]{h} \rar{u} & (\alpha \otimes^\MT \beta) \dar{\hat{h}} \\
        & C
    \end{tikzcd}
    \]
\end{enumerate}
\end{theorem}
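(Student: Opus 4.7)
The plan is to handle the three claims in sequence: construct the Kleisli tensor directly from $\dst$, transport the construction to Eilenberg-Moore via a reflexive coequalizer, and then read the universal property off the coequalizer's own universal property.

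For (1), I would define $\otimes_\MT$ to agree with $\otimes$ on objects and to send Kleisli morphisms $f:A \to \MT(A')$, $g:B \to \MT(B')$ to $\dst_{A',B'} \circ (f \otimes g)$, with monoidal unit $I$. Functoriality with respect to Kleisli composition is exactly where commutativity enters: the equation $\dst = \dstp$, together with compatibility of the strengths with $\mu$, is what makes $\dst$ behave well with respect to iterated tensoring. The associator, unitors, and symmetry of $\CV$ are turned into Kleisli morphisms via $\eta$, and the coherence diagrams in $\klc{\MT}{\CV}$ reduce to the strong monad coherences together with commutativity for the symmetry.

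For (2), I would construct $\alpha \otimes^\MT \beta$ as a coequalizer in $\emc{\MT}{\CV}$ of the reflexive pair
\[
\emfr{\MT}(\MT A \otimes \MT B) \;\rightrightarrows\; \emfr{\MT}(A \otimes B),
\]
whose two legs are $\emfr{\MT}(\alpha \otimes \beta)$ and the unique EM-morphism determined by $\dst : \MT A \otimes \MT B \to \MT(A \otimes B)$ under the free-forgetful adjunction (concretely, $\mu \circ \MT(\dst)$). The common section $\emfr{\MT}(\eta_A \otimes \eta_B)$ witnesses reflexivity, using that $(\alpha \circ \eta) = \id$ and $\dst \circ (\eta \otimes \eta) = \eta$. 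I would then extend $\otimes^\MT$ to a bifunctor by applying coequalizers to morphism pairs, and transport associator, unitors, and symmetry from the Kleisli tensor through the universal property of each coequalizer. Preservation of the monoidal structure by $K$ reduces to an isomorphism $\emfr{\MT}(A) \otimes^\MT \emfr{\MT}(B) \cong \emfr{\MT}(A \otimes B)$, which holds because the corresponding coequalizer splits.

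For (3), I would define $u : A \otimes B \to \alpha \otimes^\MT \beta$ as the composite of $\eta_{A \otimes B}$ with the coequalizer projection; bilinearity of $u$ is the commutativity that the coequalizer imposes by construction. Given a bimorphism $h : A \otimes B \to C$ into an algebra $(C,\gamma)$, the map $\gamma \circ \MT(h) : \MT(A \otimes B) \to C$ is an EM-morphism, and diagram~\eqref{eq:commutative-monad-bimorphism-diagram} is exactly the statement that it coequalizes the parallel pair above; the induced factorization through the coequalizer is $\hat h$, and uniqueness of $\hat h$ is inherited from the coequalizer.

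The main obstacle I anticipate is not any single calculation but the accumulated verification of bifunctoriality and coherence for $\otimes^\MT$ on $\emc{\MT}{\CV}$, since every coherence equation must be pushed through a universal property of a coequalizer (or an iterated one, for associativity). The standard strategy is to reduce each equation to the corresponding identity in $\klc{\MT}{\CV}$ precomposed with the free-algebra projection, and then invoke the universal property to transport it to the quotient; making this fully rigorous depends on reflexive coequalizers being sufficiently well-behaved with respect to $\otimes$ in each variable, which is exactly what the hypothesis is calibrated to provide.
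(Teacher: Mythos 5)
Your outline is correct and is essentially the standard proof: the paper itself states this theorem without proof (citing Jacobs and Seal), but your parts (2) and (3) coincide exactly with the constructions the paper later carries out in full generality in Section 6 --- the reflexive coequalizer of $\emfr{\MT}(\alpha\otimes\beta)$ against $\mu\circ\MT(\dst)$ with common section $\emfr{\MT}(\eta\otimes\eta)$ is diagram~\eqref{eq:lifting-coeq-pair} for $\lambda=\dst$, your part (3) is Theorem~\ref{thm:universal-bimorphisms}, and the splitting argument for $K$ preserving the tensor is Proposition~\ref{prop:free-construction-local}. You also correctly locate where commutativity is indispensable: $\dst=\dstp$ is what makes $\dst$ satisfy the binary Kleisli multiplication axiom, hence what makes $\otimes_\MT$ respect Kleisli composition (interchange). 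The one genuinely thin spot is the coherence of $\otimes^\MT$, above all associativity: transporting the associator through the quotients requires that $\otimes^\MT$ preserve reflexive coequalizers in each variable separately and a three-by-three interchange of reflexive coequalizers, which is the technical heart of Seal's account and is only gestured at in your last paragraph rather than established.
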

Here we see the connection between bilinear maps and universal tensors extended to the setting of commutative monads on some monoidal category. We shall move beyond this setting in later sections.

\section{Beyond Commutative Monads}
As far as we are aware, the existing literature on bimorphisms, and their corresponding classifying objects in Eilenberg-Moore categories, restricts attention to commutative monads on monoidal categories. We now pursue a wide generalization of these notions. Throughout, we will be careful to identify the roles of the various assumptions play in the theory, disentangling the large number of assumptions that are typically combined in the commutative monad setting.

\subsection{Bimorphisms}
We observe that diagram~\eqref{eq:commutative-monad-bimorphism-diagram} makes no essential use of the following features:
\begin{enumerate}
    \item That~$\MT$ is a monad rather than a mere endofunctor.
    \item That the algebras on the left and right of the diagram are both~$\MT$-algebras. 
    \item The monoidal structure.
    \item That~$\otimes$ is a bifunctor, i.e.\ that it is 2-ary.
    \item That~$\dst$ is natural and satisfies various coherence axioms.
\end{enumerate}
Abstracting away from these inessential details, we arrive at the following.
\begin{definition}[Left $\lambda$-morphism for endofunctor-algebras]
\label{def:left-endofunctor-bimorphism}
Let $S : \CC \rightarrow \CC$ and $T : \CD \rightarrow \CD$ be endofunctors, and $H : \CC \rightarrow \CD$ a functor.
For a morphism $\lambda : H(S(A)) \rightarrow T(H(A))$, $S$-algebra $(A,\alpha)$ and $T$-algebra~$(B,\beta)$, a morphism $h : H(A) \rightarrow B$ is a
\df{(left) $\lambda$-morphism} from $\alpha$ to $\beta$ if the following diagram commutes:
\begin{equation}
\label{eq:left-bimorphism}
\begin{tikzcd}
H(S(A)) \rar{\lambda} \dar[swap]{H(\alpha)} & T(H(A)) \rar{T(h)} & T(B) \dar{\beta} \\
H(A) \arrow[rr, swap, "h"] & & B
\end{tikzcd}
\end{equation}
In which case we shall write~$h : \alpha \klar{\lambda} \beta$. We shall also say~$h$ is a left bimorphism when we do not wish to specify~$\lambda$, or it is clear from the context.
\end{definition}
We shall also have need of a related condition.
\begin{definition}[Right $\rho$-morphism for endofunctor-algebras]
\label{def:right-endofunctor-bimorphism}
Let $S : \CC \rightarrow \CC$ and $T : \CD \rightarrow \CD$ be endofunctors, and $G : \CD \rightarrow \CC$ a functor.
For a morphism~$\rho : S(G(B)) \rightarrow G(T(B))$, $S$-algebra~$(A,\alpha)$ and~$T$-algebra~$(B,\beta)$, a morphism~$h : A \rightarrow G(B)$ is a
\df{(right) $\rho$-morphism} from $\alpha$ to $\beta$ if the following diagram commutes:
\begin{equation}
\label{eq:right-bimorphism}
\begin{tikzcd}
S(A) \rar{S(h)} \dar[swap]{\alpha} & S(G(B)) \rar{\rho} & G(T(B)) \dar{G(\beta)} \\
A \arrow[rr, swap, "h"] & & G(B)
\end{tikzcd}
\end{equation}
In which case we shall write~$h : \alpha \emar{\rho} \beta$.  We shall also say~$h$ is a right bimorphism when we do not wish to specify~$\rho$, or it is clear from the context.
\end{definition}
\begin{remark}
    The terms left and right correspond to the position of~$\lambda$ or~$\rho$ in diagrams~\eqref{eq:left-bimorphism} and~\eqref{eq:right-bimorphism}.
\end{remark}
We shall refer to morphisms satisfying either equation~\eqref{eq:left-bimorphism} or~\eqref{eq:right-bimorphism} as bimorphisms. This terminology follows~\cite{jacobs1994semantics}, and the extension of bilinearity terminology from the original~\cite{kock1971bilinearity}, although we push the notion even further. Admittedly, there is nothing intrinsically binary about these morphisms, but the terminology is concise, and maintains the connection with historical conventions.
\begin{remark}
Note that conditions~\eqref{eq:left-bimorphism} and~\eqref{eq:right-bimorphism} are not dual, although both capture being an algebra morphism ``up-to a mediating morphism''. As we shall see later the theory develops rather differently for the two, with condition~\eqref{eq:left-bimorphism} perhaps being the more interesting concept.
\end{remark}
Although we will generally start imposing stronger conditions on~$\lambda$ in the notion of bimorphism, for example naturality or compatibility with other structures, it is useful to identify this level of abstraction. This will allow us to correctly isolate which conditions are essential for later results.

\begin{example}[Endofunctor algebra morphisms]
$T$-algebra morphisms are a special case of bimorphisms, both in the sense of definition~\ref{def:left-endofunctor-bimorphism} and~\ref{def:right-endofunctor-bimorphism}. For $T$-algebras $(A,\alpha)$ and $(B,\beta)$, a morphism $h : A \rightarrow B$ is an algebra morphism if and only if:
\[
\begin{tikzcd}
    \Id(T(A)) \rar{\id} \dar[swap]{\Id(\alpha)} & T(\Id(A)) \rar{T(h)} & T(B) \dar{\beta} \\
    \Id(A) \arrow[rr, swap, "h"] & & B
\end{tikzcd}
\]
That is, if it is a~left ${\id}$-morphism. We also have that~$h$ is an algebra morphism if and only if it is a~right ${\id}$-morphism.
\end{example}
The following example shows that by generalizing to endofunctor algebras, we can state well-known
axioms as requiring certain morphisms be bimorphisms.
\begin{example}[Eilenberg-Moore axioms as bimorphisms]
For a monad~$\MT$, let~$\alpha : \MT(A) \rightarrow A$ be an \emph{endofunctor} algebra
for~$\MT$. That~$\id$ is a bimorphism of type~$\id \klar{\eta} \alpha$ is equivalent to requiring the following diagram to commute:
\[
\begin{tikzcd}
\Id(\Id(A)) \rar{\eta} \dar[swap]{\Id(\id)} & \MT(\Id(A)) \rar{\MT(\id)} & \MT(A) \dar{\alpha} \\
\Id(A) \arrow[rr, swap, "\id"] & & A
\end{tikzcd}
\]
Some trivial simplifications show this is the same as~$\alpha$ satisfying the unit axiom
for an Eilenberg-Moore algebra.

Similarly, that~$\alpha$ is a bimorphism of type~$\alpha \klar{\mu} \id$ is equivalent
to requiring the following diagram to commute:
\[
\begin{tikzcd}
\MT^2(A) \rar{\mu} \dar[swap]{\MT(\alpha)} & \Id(\MT(A)) \rar{\Id(\alpha)} & \Id(A) \dar{\id} \\
\MT(A) \arrow[rr, swap, "\alpha"] & & A
\end{tikzcd}
\]
Simplifying, this is exactly that~$\alpha$ satisfies the multiplication axiom for an
Eilenberg-Moore algebra.
\end{example}
As a second example of the ubiquity of bimorphism conditions, we note that the monad axioms themselves
can be phrased as bimorphism conditions.
\begin{example}[Monad axioms]
The unit axioms for a monad require that the following diagram commutes for all~$A$, both in the
case where~$\lambda_A = \eta_{\MT(A)}$ and~$\lambda_A = \MT(\eta_A)$:
\[
\begin{tikzcd}
\MT(\Id(A)) \rar{\lambda} \dar[swap]{\MT(\id)} & \MT(\MT(A)) \rar{\MT(\id)} & \MT^2(A) \dar{\mu} \\
A \arrow[rr, swap, "\id"] & & A
\end{tikzcd}
\]
That is~$\id_A$ is both:
\begin{enumerate}
    \item A left ${\eta_{\MT(A)}}$-morphism of type~$\id \klar{\eta_{\MT(A)}} \mu$.
    \item A left ${\MT(\eta_A)}$-morphism of type~$\id \klar{\MT(\eta_{A})} \mu$.
\end{enumerate}
Similarly, the associativity axiom requires that the following commutes for all~$A$:
\[
\begin{tikzcd}
\Id(\MT^3(A)) \rar{\mu} \dar[swap]{\Id(\mu)} & \MT(\MT(A)) \rar{\MT(\mu)} & \MT^2(A) \dar{\mu} \\
\MT^2(A) \arrow[rr, swap, "\mu"] & & \MT(A)
\end{tikzcd}
\]
That is, the components of~$\mu$ are left ${\mu}$-morphisms of type~$\mu \klar{\mu} \mu$.
\end{example}

\subsection{Kleisli and Eilenberg-Moore Laws}
In later sections, we will require more structure on the morphism~$\lambda$ in diagrams~\eqref{eq:left-bimorphism} and~\eqref{eq:right-bimorphism}. Of particular interest will be those that interact well with monad structure.

Let~$\MS : \CC \rightarrow \CC$ and~$\MT : \CD \rightarrow \CD$ be monads, and~$H : \CC \rightarrow \CD$ a functor. A \df{Kleisli law} is a natural transformation
\[\lambda : H \circ \MS \nt \MT \circ H \]
such that the following diagrams commute:
\begin{equation}
\label{eq:kleisli-law-unit-axiom}
\begin{tikzcd}[column sep=0.5em]
    H\MS(A)
        \ar{rr}{\lambda}
        \ar[<-,swap]{dr}{H\eta}
    & & \MT H(A)
        \ar[<-]{dl}{\eta}
    \\
    &H(A)
\end{tikzcd}
\end{equation}
\begin{equation}
\label{eq:kleisli-law-multiplication-axiom}
\begin{tikzcd}
H\MS^2(A) \dar[swap]{H\mu} \rar{\lambda} & \MT H \MS(A) \rar{\MT \lambda} & \MT^2 H (A) \dar{\mu} \\  
H \MS(A) \arrow[rr, swap, "\lambda"] & & \MT H(A)
\end{tikzcd}
\end{equation}
The significance of Kleisli laws is captured in the following theorem, which is probably folklore. See for example~\cite{manes2007monad}.
\begin{theorem}[Kleisli laws Classify Liftings]
\label{thm:kleisli-correspondence}
Let~$\MS : \CC \rightarrow \CC$ and $\MT : \CD \rightarrow \CD$ be monads, and~$H : \CC \rightarrow \CD$ a functor. There is a bijective correspondence between:
\begin{enumerate}
    \item \label{en:kl-laws} Kleisli laws of type~$H \circ \MS \nt \MT \circ H$.
    \item \label{en:kl-liftings} \emph{Kleisli liftings} $\klc{\MS}{\CC} \rightarrow \klc{\MT}{\CD}$, i.e.\ functors $\overline H : \klc{\MS}{\CC} \rightarrow \klc{\MT}{\CD}$ such that the following diagram commutes:
    \[
        \begin{tikzcd}
            \klc{\MS}{\CC} \rar{\overline{H}}  & \klc{\MT}{\CD} \\
            \CC \uar{\klfree{\MS}} \rar[swap]{H} & \CD \uar[swap]{\klfree{\MT}}
        \end{tikzcd}
    \]
\end{enumerate}
\end{theorem}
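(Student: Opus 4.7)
The plan is to give explicit constructions in both directions and then check that they are mutually inverse. Both constructions are straightforward adaptations of the classical distributive law argument, so the main subtlety is bookkeeping: keeping track of when a morphism lives in $\CC$, in $\CD$, in $\klc{\MS}{\CC}$, or in $\klc{\MT}{\CD}$.

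\smallskip
\noindent\textbf{From $\lambda$ to $\overline H$.}\ Given a Kleisli law $\lambda$, define $\overline H$ on objects by $\overline H(A) := H(A)$, and on a Kleisli morphism $f \colon A \to \MS B$ (i.e.\ $f \colon A \klar{\MS} B$) by
\[
\overline H(f) \;:=\; \lambda_B \circ H(f) \colon H(A) \to \MT H(B).
\]
I will verify that $\overline H$ preserves Kleisli identities by a direct calculation: the Kleisli identity on $A$ is $\eta^{\MS}_A$, and unit axiom~\eqref{eq:kleisli-law-unit-axiom} gives $\lambda_A \circ H(\eta^{\MS}_A) = \eta^{\MT}_{H(A)}$, which is the Kleisli identity on $H(A)$. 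For composition, given $f \colon A \klar{\MS} B$ and $g \colon B \klar{\MS} C$, their Kleisli composite is $\mu^{\MS}_C \circ \MS(g) \circ f$; applying $\overline H$ and unfolding, the multiplication axiom~\eqref{eq:kleisli-law-multiplication-axiom} together with naturality of $\lambda$ reduces this to the Kleisli composite of $\overline H(g)$ with $\overline H(f)$. Commutativity of the square with $\klfree{\MS}$ and $\klfree{\MT}$ then follows because $\klfree{\MS}(f) = \eta^{\MS}_B \circ f$, so $\overline H(\klfree{\MS}(f)) = \lambda_B \circ H(\eta^{\MS}_B) \circ H(f) = \eta^{\MT}_{H(B)} \circ H(f) = \klfree{\MT}(H(f))$, using the unit axiom again.

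\smallskip
\noindent\textbf{From $\overline H$ to $\lambda$.}\ Given a lifting $\overline H$, define
\[
\lambda_A \;:=\; \overline H(\id_{\MS A}) \colon H\MS(A) \to \MT H(A),
\]
where $\id_{\MS A}$ is viewed as a Kleisli morphism $\MS A \klar{\MS} A$. Naturality of $\lambda$ follows from functoriality of $\overline H$ applied to the Kleisli squares expressing $\MS$-naturality of $\id$, combined with the fact that $\overline H \circ \klfree{\MS} = \klfree{\MT} \circ H$. For the unit axiom, note that $\klfree{\MS}(\id_A) = \eta^{\MS}_A$ viewed as a Kleisli morphism $A \klar{\MS} \MS A$, and $\overline H$ sends it to $\eta^{\MT}_{H(A)}$; composing with $\lambda_A$ in the Kleisli category of $\MT$ gives $\overline H$ of a Kleisli composite that equals an identity, recovering~\eqref{eq:kleisli-law-unit-axiom}. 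The multiplication axiom~\eqref{eq:kleisli-law-multiplication-axiom} is the functoriality equation for $\overline H$ applied to the composable pair $\id_{\MS^2 A} , \id_{\MS A}$ (reinterpreted as Kleisli morphisms $\MS^2 A \klar{\MS} \MS A$ and $\MS A \klar{\MS} A$), whose Kleisli composite in $\klc{\MS}{\CC}$ is $\mu^{\MS}_A$.

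\smallskip
\noindent\textbf{Mutual inverse.}\ Starting from $\lambda$, building $\overline H$, and extracting the new $\lambda'_A = \overline H(\id_{\MS A}) = \lambda_A \circ H(\id_{\MS A}) = \lambda_A$ returns the original. Conversely, starting from a lifting $\overline H$, forming $\lambda$ and then the new lifting $\overline H'$, one checks on a Kleisli morphism $f \colon A \klar{\MS} B$ that $\overline H'(f) = \lambda_B \circ H(f) = \overline H(\id_{\MS B}) \circ \klfree{\MT}(H(f)) = \overline H(\id_{\MS B}) \circ \overline H(\klfree{\MS}(f))$, which by functoriality equals $\overline H$ applied to the Kleisli composite $\id_{\MS B} \bullet \klfree{\MS}(f) = f$.

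\smallskip
The main obstacle, and the place where it is easiest to slip up, is the multiplication axiom direction: correctly translating functorial preservation of Kleisli composition into~\eqref{eq:kleisli-law-multiplication-axiom}, and conversely, requires carefully unfolding the definition of Kleisli composition and using naturality of $\lambda$ at $\MS$-applied objects. Everything else is essentially bureaucratic diagram chasing.
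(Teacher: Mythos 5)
Your proposal is correct: the paper states this theorem as folklore and gives no proof, but your forward construction $\overline H(f) = \lambda_B \circ H(f)$ is exactly the action on morphisms the paper records, and your inverse $\lambda_A := \overline H(\id_{\MS A})$ together with the round-trip checks is the standard argument, carried out soundly. One minor notational slip: $\klfree{\MS}(\id_A)$ is the Kleisli identity $A \klar{\MS} A$ (its underlying $\CC$-morphism is $\eta^\MS_A$), not a Kleisli morphism $A \klar{\MS} \MS(A)$; the unit axiom is instead recovered by applying $\overline H$ to the identity $\id_{\MS A} \bullet \klfree{\MS}(\eta^\MS_A) = \id_A$ in $\klc{\MS}{\CC}$, which is evidently what you intended and does not affect the validity of the argument.
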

For a Kleisli law~$\lambda : H \circ \MS \nt \MT \circ H$, the action on morphisms of its corresponding Kleisli lifting~$\klf{H}{\lambda} : \klc{\MS}{\CC} \rightarrow \klc{\MT}{\CD}$ is:
\[
\klf{H}{\lambda} :\quad A \xrightarrow{f} \MS(B) \quad\mapsto\quad H(A) \xrightarrow{H(f)} H(\MS(B)) \xrightarrow{\lambda} \MT(H(B))
\]

\begin{example}
If~$\MT$ is a strong monad, for each~$A$, the morphisms~$A \otimes \MT(B) \nt \MT(A \otimes B)$ form a Kleisli law.
\end{example}

We will also have need of a dual notion to Kleisli laws. Let~$\MS : \CC \rightarrow \CC$ and~$\MT : \CD \rightarrow \CD$ be monads, and~$G : \CD \rightarrow \CC$ a functor. An \df{Eilenberg-Moore law} is a natural transformation~$\rho : \MS \circ G \nt G \circ \MT$ such that the following diagrams commute:
\[
\begin{tikzcd}[column sep=0.5em]
\MS(G(A))
    \ar{rr}{\rho}
    & & G(\MT(A))
     \\
    & G(A)
    \ar{ul}{\eta}
    \ar[swap]{ur}{G(\eta)}
\end{tikzcd}
\]
\[
\begin{tikzcd}
    \MS^2(G(A)) \dar[swap]{\mu} \rar{\MS(\rho)} & \MS(G(\MT(A))) \rar{\rho} & G(\MT^2(A)) \dar{G\mu} \\
    \MS(G(A)) \arrow[rr, swap, "\rho"] & & G(\MT(A))
\end{tikzcd}
\]
Similarly to Kleisli laws, the significance of Eilenberg-Moore laws is captured by the following well-known result, credited to~\cite{appelgate1965acyclic} in \cite{johnstone1975adjoint}.
\begin{theorem}[Eilenberg-Moore laws Classify Liftings]
\label{thm:eilenberg-moore-correspondence}
Let~$\MS : \CC \rightarrow \CC$ and $\MT : \CD \rightarrow \CD$ be monads, and~$K : \CD \rightarrow \CC$ a functor. There is a bijective correspondence between:
\begin{enumerate}
    \item \label{en:em-laws} Eilenberg-Moore laws of type~$\MS \circ G \nt G \circ \MT$.
    \item \label{en:em-liftings} \emph{Eilenberg-Moore liftings} $\emc{\MT}{\CD} \to \emc{\MS}{\CC}$, i.e. functors $\overline{G} : \emc{\MT}{\CD} \to \emc{\MS}{\CC}$ such that the following diagram commutes:
    \[
        \begin{tikzcd}
            \emc{\MT}{\CD}
                \dar[swap]{\eilmoforget{\MT}}
                \rar{\overline{G}}  &
            \emc{\MS}{\CC}
                \dar{\eilmoforget{\MS}}
            \\
            \CD \rar[swap]{G} & \CC
        \end{tikzcd}
    \]
\end{enumerate}
\end{theorem}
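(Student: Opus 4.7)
The plan is to set up explicit constructions in both directions and then verify they are mutually inverse. Given an Eilenberg-Moore law $\rho : \MS G \nt G\MT$, I would define a lifting $\overline{G}$ by sending a $\MT$-algebra $(B,\beta)$ to the $\MS$-algebra $(G(B), G(\beta) \circ \rho_B)$, and sending an algebra morphism $f$ to $G(f)$. The unit and multiplication axioms for the EM law translate directly into the unit and multiplication axioms for the $\MS$-algebra structure $G(\beta)\circ\rho_B$ (using that $(B,\beta)$ itself satisfies the EM-algebra axioms). Naturality of $\rho$ combined with $f$ being a $\MT$-algebra morphism shows that $G(f)$ is an $\MS$-algebra morphism, and the commuting triangle with the forgetful functors is immediate by construction.

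In the other direction, given a lifting $\overline{G}$, I would extract $\rho$ from its action on free algebras. For each $B \in \CD$, apply $\overline{G}$ to the free algebra $(\MT(B),\mu_B)$ to obtain an $\MS$-algebra structure $\sigma_B : \MS(G(\MT(B))) \rightarrow G(\MT(B))$; then set
\[
    \rho_B \;=\; \sigma_B \circ \MS(G(\eta_B^{\MT})) \;:\; \MS(G(B)) \longrightarrow G(\MT(B)).
\]
Naturality of $\rho$ will follow by applying $\overline{G}$ to the Kleisli morphisms $\MT(f) : (\MT(A),\mu_A) \to (\MT(B),\mu_B)$ and using that $\overline G$ restricted to underlying maps agrees with $G$. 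The unit axiom for $\rho$ uses the unit axiom of the $\MS$-algebra $(G\MT(B),\sigma_B)$, and the multiplication axiom uses the fact that $\mu_B : (\MT^2(B),\mu_{\MT(B)}) \to (\MT(B),\mu_B)$ is itself a $\MT$-algebra morphism, so $\overline{G}(\mu_B) = G(\mu_B)$ is an $\MS$-algebra morphism relating $\sigma_{\MT(B)}$ and $\sigma_B$.

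Finally I would check the correspondence is a bijection. For the round trip starting with $\rho$: the induced $\sigma_B$ is $G(\mu_B) \circ \rho_{\MT(B)}$, so the reconstructed law is $G(\mu_B) \circ \rho_{\MT(B)} \circ \MS(G(\eta_B))$, which by naturality of $\rho$ rewrites as $G(\mu_B \circ \MT(\eta_B)) \circ \rho_B = \rho_B$ via the monad unit law. For the round trip starting with $\overline{G}$: writing $\tau_B$ for the structure of $\overline{G}(B,\beta)$, the fact that $\beta$ is a $\MT$-algebra morphism $(\MT(B),\mu_B) \to (B,\beta)$ makes $G(\beta)$ an $\MS$-algebra morphism, so $\tau_B \circ \MS(G(\beta)) = G(\beta)\circ \sigma_B$; composing with $\MS(G(\eta_B))$ and using $\beta \circ \eta_B = \id$ collapses $G(\beta)\circ\rho_B$ to $\tau_B$, recovering $\overline{G}$ on objects. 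On morphisms, the commuting triangle with the forgetful functors pins down $\overline{G}(f) = G(f)$.

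The main obstacle I anticipate is not any single diagram chase but the bookkeeping of which axiom of which structure is being invoked at each step; in particular, verifying that $\sigma$ assembles naturally in $B$ and satisfies the EM-law axioms requires careful use of the fact that $\overline{G}$ sends the canonical morphisms between free algebras to their images under $G$, which is exactly what the commuting-triangle condition guarantees.
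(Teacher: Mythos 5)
Your proposal is correct: the paper does not actually include a proof of this theorem (it cites it as well known, credited to Appelgate), but your forward construction coincides exactly with the action on objects the paper records in equation~\eqref{eq:eilenberg-moore-law-functor-action}, and the inverse construction via free algebras together with the two round-trip checks is the standard argument. The only minor point worth flagging is that verifying the multiplication axiom for $G(\beta)\circ\rho_B$ also uses naturality of $\rho$ at the structure map $\beta$, not just the EM-law and EM-algebra axioms, but this is a bookkeeping detail rather than a gap.
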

For a given Eilenberg-Moore law~$\rho : \MS \circ G \nt G \circ \MT$, the action on objects of the corresponding Eilenberg-Moore lifting~$\emf{G}{\rho} : \emc{\MT}{\CD} \to \emc{\MS}{\CC}$  is:
\begin{equation}
\label{eq:eilenberg-moore-law-functor-action}
\emf{G}{\rho}: \quad \MT(A) \xrightarrow{\alpha} A \quad\mapsto\quad \MS(G(A)) \xrightarrow{\rho} G(\MT(A)) \xrightarrow{G(\alpha)} G(A) 
\end{equation}
The following well-known facts about Kleisli and Eilenberg-Moore laws will be useful.
\begin{proposition}
\label{prop:em-kleisli-facts}
If~$L : \CC \rightarrow \CD$ is a functor, then:
\begin{enumerate}
    \item \label{en:em-inverse} If~$\rho : \MS \circ L \nt L \circ \MT$ is an invertible Eilenberg-Moore law, then the inverse~$\rho^{-1} : L \circ \MT \nt \MS \circ L$ is a Kleisli law. 
    \item \label{en:em-transpose} If we have adjunction~$L \dashv R$, with unit~$\eta : \Id \nt R \circ L$ and counit~$\epsilon : L \circ R \nt \Id$, then natural transformation~$\rho : \MS \circ R \nt R \circ \MT$ is an Eilenberg-Moore law if and only if its transpose:
    \[ L \circ \MS \xrightarrow{L\MS(\eta)} L \circ \MS \circ R \circ L \xrightarrow{L(\rho_L)} L \circ R \circ \MT \circ L \xrightarrow{\epsilon_{\MT L}} \MT \circ L \]
    is a Kleisli law.
\end{enumerate}
\end{proposition}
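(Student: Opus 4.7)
The plan is to prove the two parts of Proposition~\ref{prop:em-kleisli-facts} rather directly, exploiting the fact that each of the Kleisli/Eilenberg-Moore law axioms consists of a single commutative diagram that can be manipulated into the other using the given structure.

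For part~\ref{en:em-inverse}, I would work equationally from the two Eilenberg-Moore law axioms for~$\rho$, using invertibility to turn each into the corresponding Kleisli law axiom for~$\rho^{-1}$. Unpacking the unit axiom for~$\rho$ gives $\rho \circ \eta^{\MS}_L = L(\eta^{\MT})$; composing on the left with~$\rho^{-1}$ yields $\rho^{-1} \circ L(\eta^{\MT}) = \eta^{\MS}_L$, which is precisely the Kleisli-law unit axiom for $\rho^{-1}$. The multiplication axiom for~$\rho$ reads $\rho \circ \mu^{\MS}_L = L(\mu^{\MT}) \circ \rho_{\MT} \circ \MS(\rho)$; composing with~$\rho^{-1}$ on the left and with~$\MS(\rho^{-1}) \circ \rho^{-1}_{\MT}$ on the right, both sides collapse via the inverse identities to give $\rho^{-1} \circ L(\mu^{\MT}) = \mu^{\MS}_L \circ \MS(\rho^{-1}) \circ \rho^{-1}_{\MT}$, which is exactly the Kleisli-law multiplication axiom. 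The only care needed is matching subscripts cleanly.

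For part~\ref{en:em-transpose}, I would invoke the mate correspondence: natural transformations $\MS \circ R \nt R \circ \MT$ correspond bijectively to natural transformations $L \circ \MS \nt \MT \circ L$ via the given adjunction, with the forward direction given exactly by the formula in the proposition, and the inverse direction $\tilde\rho \mapsto R(\tilde\rho) \circ \eta_{\MS R}$ followed by post-composition with $R\MT$ appropriately (that is, $\rho = R\tilde\rho_R \circ \eta_{\MS R}$ suitably reformulated, checked via the triangle identities to be mutually inverse). I would then show that this bijection restricts to a bijection between Eilenberg-Moore laws and Kleisli laws by verifying separately that the unit and multiplication axioms are in correspondence.

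Concretely, for the unit axioms I would paste the transpose formula into the Kleisli unit triangle and use naturality of~$\eta$ together with one triangle identity $\epsilon L \circ L \eta = \id$ to reduce the composite to $L(\eta^{\MT})$, then use the Eilenberg-Moore unit axiom for~$\rho$ to recognise this equals the transpose applied to $L\eta^{\MS}$; the reverse implication uses the unique characterisation of~$\rho$ from its mate. For the multiplication axioms, I would similarly expand the Kleisli multiplication square for $\tilde\rho$, apply naturality repeatedly to slide the $L\eta$'s and $\epsilon$'s past monad structure, and invoke the Eilenberg-Moore multiplication axiom at the key step. The main obstacle will be the bookkeeping in this last step: the diagrams involve five or six applications of naturality and two triangle identities, so I would organise the calculation either as a sequence of pasted 2-cells or as an explicit equational chain, and verify the converse implication by running the same reasoning in the other direction using the mate of a Kleisli law.
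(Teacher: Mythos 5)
The paper offers no proof of this proposition at all: it is stated as a well-known fact (the correspondence is credited to the literature), so there is no argument of record to compare yours against. Judged on its own terms, your proposal is correct and is the standard argument. Part~1 is essentially a complete proof: you unpack the Eilenberg--Moore axioms as $\rho \circ \eta^{\MS}_{L} = L(\eta^{\MT})$ and $\rho \circ \mu^{\MS}_{L} = L(\mu^{\MT}) \circ \rho_{\MT} \circ \MS(\rho)$, and conjugating by the inverse does yield exactly the Kleisli axioms $\rho^{-1} \circ L(\eta^{\MT}) = \eta^{\MS}_{L}$ and $\rho^{-1} \circ L(\mu^{\MT}) = \mu^{\MS}_{L} \circ \MS(\rho^{-1}) \circ \rho^{-1}_{\MT}$; the only unstated point is that $\rho^{-1}$ is automatically natural as the inverse of a natural isomorphism. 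Part~2 is the standard mate (doctrinal adjunction) argument, and you identify the right ingredients: naturality of $\eta^{\MS}$ and of $\epsilon$, a triangle identity, and the Eilenberg--Moore axiom applied at the component $L(A)$. Two minor caveats: your description of the inverse of the mate correspondence is garbled as written --- it should be $\lambda \mapsto R\MT(\epsilon) \circ R(\lambda_{R}) \circ \eta_{\MS R}$ --- though you flag it as needing reformulation and the triangle identities do make the two assignments mutually inverse; and the multiplication-axiom chase and the converse direction are only planned rather than executed, which is acceptable for a folklore lemma but means the hardest bookkeeping is still owed.
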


\subsection{General Arities}
\label{s:general-arities}
It is natural to consider functors, and corresponding bimorphisms, of more general arities. To do so, we consider categories~$\CC_1,\ldots,\CC_n$ and~$\CD$, with:
\begin{enumerate}
    \item A functor~$H : \CC_1 \times \ldots \times \CC_n \rightarrow \CD$.
    \item Endofunctors 
    \[ S_1 : \CC_1 \rightarrow \CC_1, \, \ldots, \, S_n : \CC_n \rightarrow \CC_n ,\]
    and algebras 
    \[S_1(A_1) \xrightarrow{\alpha_1} A_1, \, \ldots, \, S_n(A_n) \xrightarrow{\alpha_n} A_n . \]
    \item An endofunctor~$T : \CD \rightarrow \CD$ and algebra~$T(B) \xrightarrow{\beta} B$.
\end{enumerate}
We can then generalize condition~\eqref{eq:left-bimorphism} to
\begin{equation}
\label{eq:left-bimorphism-general-arities}
\begin{tikzcd}
H(S_1(A_1),\ldots,S_n(A_n)) \rar{\lambda} \dar[swap]{H(\alpha_1,\ldots,\alpha_n)} & T(H(A_1,\ldots,A_n)) \rar{T(h)} & T(B) \dar{\beta} \\
H(A_1,\ldots,A_n) \arrow[rr, swap, "h"] & & B
\end{tikzcd}
\end{equation}
Assuming further that $\MS_1,\dots,\MS_n$ are monads, we could then define an~$n$-ary Kleisli law as a natural transformation of type
\[
H(\MS_1(A_1),\ldots,\MS(A_n)) \nt \MT(H(A_1,\ldots,A_n))
\]
such that the following two diagrams commute:
\begin{equation}
\label{eq:multi-kleisli-unit}
\begin{tikzcd}[column sep=0.5em]
H(\MS_1(A_1),\ldots, \MS_n(A_n))
    \ar{rr}{\lambda}
& & \MT H(A_1,\ldots,A_n)
\\
&H(A_1,\ldots,A_n)
    \ar{ul}{H(\eta,\ldots,\eta)}
    \ar[swap]{ur}{\eta}
\end{tikzcd}
\end{equation}
\begin{equation}
\label{eq:multi-kleisli-multiplication}
\begin{tikzcd}
H(\MS_1^2(A_1),\ldots,\MS_n^2(A_n)) \dar[swap]{H(\mu,\ldots,\mu)} \rar{\lambda} & \MT H(\MS_1(A_1),\ldots,\MS_n(A_n)) \rar{\MT \lambda} & \MT^2 H(A_1,\ldots,A_n) \dar{\mu} \\  
H(\MS_1(A_1),\ldots,\MS_n(A_n)) \arrow[rr, swap, "\lambda"] & & \MT H(A_1,\ldots,A_n)
\end{tikzcd}
\end{equation}
We note that for a family of endofunctors~$T_i : \CC_i \rightarrow \CC_i$, there is a product endofunctor~$\prod_i T_i : \prod_i \CC_i \rightarrow \prod_i \CC_i$. In the case that the~$T_i$ are monads, $\prod_i T_i$ also carries a monad structure pointwise.
\begin{proposition}
Assume~$\MT_i : \CC_i \rightarrow \CC_i$ is a family of monads. There is an induced monad with:
\begin{description}
\item[Functor]: The endofunctor~$\prod_i \MT_i : \prod_i \CC_i \rightarrow \CC_i$ acts pointwise:
\[
\prod_i \MT_i(A_1,\ldots,A_n) := (\MT_1(A_1),\ldots,\MT_n(A_n))
\]
\item[Unit]: The unit has components:
\[
\eta_{(A_1,\ldots,A_n)} := (\eta_{A_1},\ldots,\eta_{A_n})
\]
\item[Multiplication]: The multiplication has components:
\[
\mu_{(A_1,\ldots,A_n)} := (\mu_{A_1},\ldots,\mu_{A_n})
\]
\end{description}
\end{proposition}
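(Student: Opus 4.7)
The plan is to verify that the proposed data satisfies the three requirements of a monad, and since everything is defined pointwise, each verification reduces immediately to the corresponding axiom holding in each factor $\CC_i$. I would structure the proof as three short pieces: functoriality of $\prod_i \MT_i$, naturality of the putative $\eta$ and $\mu$, and the monad equations themselves.

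First, to see that $\prod_i \MT_i$ is a well-defined endofunctor on $\prod_i \CC_i$, I recall that morphisms in the product category are tuples $(f_1,\ldots,f_n)$ with composition and identities computed componentwise. Setting $(\prod_i \MT_i)(f_1,\ldots,f_n) := (\MT_1(f_1),\ldots,\MT_n(f_n))$ then preserves identities and composition because each $\MT_i$ does. Naturality of $\eta$ and $\mu$ as defined in the statement is immediate: a commuting square in $\prod_i \CC_i$ is exactly an $n$-tuple of commuting squares in the $\CC_i$, and each such square is furnished by naturality of $\eta^{\MT_i}$ or $\mu^{\MT_i}$.

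For the monad axioms, I would write out the left unit, right unit, and associativity diagrams for $\prod_i \MT_i$ at a generic object $(A_1,\ldots,A_n)$, observe that every morphism in each diagram is a tuple of componentwise morphisms, and note that by the product category's componentwise composition the whole diagram commutes if and only if it commutes at every index $i$. At each index $i$ the diagram in question is precisely the corresponding monad axiom diagram for $\MT_i$, which holds by assumption.

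There is no real obstacle here; the only thing to be careful about is bookkeeping, namely that one remembers the product category has morphisms, composition, and identities all defined coordinatewise, so that universal diagrams commute iff they commute coordinatewise. I would close by remarking that the construction is functorial in the obvious sense and is simply the product in the (2-)category of monads, though the proposition as stated only asks for the bare monad structure.
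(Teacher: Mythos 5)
Your proof is correct and complete: the componentwise verification of functoriality, naturality of $\eta$ and $\mu$, and the three monad axioms is exactly the argument intended here, and the paper in fact omits the proof entirely as routine. Your closing remark about the construction being the product in the 2-category of monads is a nice observation, though not needed for the statement.
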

Unravelling definitions, for endofunctors~$T_1, \ldots, T_n$, a~$\prod_i T_i$-algebra is simply a product of~$T_i$-algebras. In the case of monads~$\MT_1, \ldots, \MT_n$
a $\prod_i \MT_i$-algebra $((A_1,\ldots,A_n), (\alpha_1,\ldots,\alpha_n))$ is exactly a tuple with each~$\alpha_i$ a~$\MT_i$-algebra structure map. Furthermore, a natural transformation~$\lambda : H \circ \prod_i \MS_i \nt \MT \circ H$ is a natural family
\[ \lambda : H(\MT_1(A_1),\ldots,\MT_n(A_n)) \rightarrow \MT(H(A_1,\ldots,A_n)) \]
$\lambda$ is a Kleisli law exactly when it satisfies diagrams~\eqref{eq:multi-kleisli-unit} and~\eqref{eq:multi-kleisli-multiplication}.

The fact that an~$n$-ary Kleisli law is the same thing as a Kleisli law for the induced monad on the product category provides two useful perspectives:
\begin{enumerate}
    \item The product monad perspective means we can prove theoretical results in full generality using the simpler formulation, using condition~\eqref{eq:left-bimorphism}.
    \item The explicit condition~\eqref{eq:left-bimorphism-general-arities} given above can clarify concrete situations in applications by exposing the separate components.
\end{enumerate}
A similar strategy can be applied to condition~\eqref{eq:right-bimorphism} for right bimorphisms.

\begin{example}
For any monad, $\dst$ and~$\dstp$ are Kleisli laws of type 
\[ \MT(A) \otimes \MT(B) \rightarrow \MT(A \otimes B) \]
In the terminology of Kleisli laws, for a commutative monad, $h : A \otimes B \rightarrow C$ is a bimorphism exactly if it is a~left ${\dst}$-morphism.
\end{example}

\subsection{Examples of bimorphisms for Kleisli laws}

\begin{example}
For any Kleisli law~$\lambda : H \circ \MS \nt \MT \circ H$, the axiom in diagram~\eqref{eq:kleisli-law-multiplication-axiom}
can be interpreted as saying the components of~$\lambda$ are~$\lambda$-morphisms $\mu \klar{\lambda} \mu$.
\end{example}


\begin{example}
    Let~$\lambda : \MS \circ \MT \nt \MT \circ \MS$ be a distributive law in the sense of Beck~\cite{beck1969distributive}. An algebra for the induced composite monad~$\MT \circ \MS$ consists of
\begin{enumerate}
    \item A $\MS$-algebra $\alpha^S : \MS(A) \rightarrow A$.
    \item A $\MT$-algebra $\alpha^T : \MT(A) \rightarrow A$. 
\end{enumerate}
These must satisfy the following compatibility condition:
\[
\begin{tikzcd}
\MS \MT(A) \dar[swap]{\MS(\alpha^T)} \rar{\lambda} & \MT \MS(A) \rar{ \MT(\alpha^S) } & \MT(A) \dar{\alpha^T} \\
\MS(A) \arrow[rr, swap, "\alpha^S"] & & A
\end{tikzcd}
\]
That is, $\alpha^S : \alpha^T \klar{\lambda} \alpha^T$. 
\end{example}

\begin{example}
\label{ex:monad-morphisms}
Every monad morphism~$\sigma : \MS \nt \MT$ induces:
\begin{enumerate}
    \item A functor~$\klf{\Id}{\sigma} : \klc{\MS}{\CC} \rightarrow \klc{\MT}{\CD}$, identity on objects, with action on morphisms:
    \[ f \mapsto \sigma \circ f \]
    \item A functor~$\emf{\Id}{\sigma} : \emc{\MT}{\CD} \rightarrow \emc{\MS}{\CC}$, identity on morphisms, with action on objects:
    \[
    (A,\alpha) \mapsto (A, \alpha \circ \sigma)
    \]
\end{enumerate}
Such a $\sigma$ is a special case of a Kleisli law, of type~$\Id \circ \MS \nt \MT \circ \Id$. A left $\sigma$-morphism from~$\alpha$ to~$\beta$ satisfies:
\[
\begin{tikzcd}
\MS(A) \rar{\sigma} \dar[swap]{\alpha} & \MT(A) \rar{\MT{h}} & \MT(B) \dar{\beta} \\
A \arrow[rr, swap, "h"] & & B
\end{tikzcd}
\]
By the naturality of~$\sigma$, this is equivalent to:
\[
\begin{tikzcd}
\MS(A) \rar{\MS(h)} \dar[swap]{\alpha} & \MS(B) \rar{\sigma} & \MT(B) \dar{\beta} \\
A \arrow[rr, swap, "h"] & & B
\end{tikzcd}
\]
Which is exactly the condition that $h$ is an~$\MS$-algebra morphism of type~$\alpha \rightarrow \emf{\Id}{\sigma}(\beta)$. 
\end{example}

\section{Bimorphisms Graphically}
\label{s:bim-graph}
We now briefly explore bimorphisms in terms of string diagrams. For background on the notation, see for example~\cite{HinzeM16a, HinzeM16b}.
For $T$-algebras~$(A,\alpha)$ and $(B,\beta)$, $h$ is an algebra morphism of type~$\alpha \rightarrow \beta$ if the following equation holds.
\[
    \begin{tikzpicture}[stringdiagram, scale=0.5, baseline=(anchor)]
    \path coordinate[dot, label=right:$\alpha$] (alpha) +(0,1) coordinate[label=above:$A$] (tr) +(-1,1) coordinate[label=above:$T$] (tl)
    ++(0,-1) coordinate[dot, label=right:$h$] (h)
    ++(0,-1) coordinate[label=below:$B$] (bot);
    \coordinate (anchor) at ($(alpha)!0.5!(h)$);
    \draw 
    (tr) -- (alpha) -- (h) -- (bot)
    (tl) to[out=-90, in=180] (alpha);
    \begin{scope}[on background layer]
    \fill[catterm] (bot) rectangle ($(tr) + (1,0)$);
    \fill[catc] (bot) rectangle ($(tl) + (-1,0)$);
    \end{scope}
    \end{tikzpicture}
    =
    \begin{tikzpicture}[stringdiagram, scale=0.5, baseline=(anchor)]
    \path coordinate[dot, label=right:$h$] (h) 
    +(0,1) coordinate[label=above:$A$] (tr)
    +(-1,1) coordinate[label=above:$T$] (tl)
    +(-1,0) coordinate (a)
    ++(0,-1) coordinate[dot, label=right:$\beta$] (beta)
    ++(0,-1) coordinate[label=below:$B$] (bot);
    \coordinate (anchor) at ($(beta)!0.5!(h)$);
    \draw
    (tr) -- (bot)
    (tl) -- (a.center) to[out=-90, in=180] (beta);
    \begin{scope}[on background layer]
    \fill[catterm] (bot) rectangle ($(tr) + (1,0)$);
    \fill[catc] (bot) rectangle ($(tl) + (-1,0)$);
    \end{scope}
    \end{tikzpicture}
\]
Intuitively, we see this as the ability to slide ``the'' algebra past the homomorphism.

Now let~$(A,\alpha)$ be an~$S$-algebra, and~$(B,\beta)$ a~$T$-algebra, and~$\lambda : H \circ S \nt T \circ H$ a natural transformation. Then, $h$ is a left $\lambda$-morphism if the following equation holds:
\begin{equation}
\label{eq:left-bimorphism-stringdiagrams}
\begin{tikzpicture}[scale=0.5, baseline=(h)]
\path coordinate[dot, label=right:$\alpha$] (alpha)
+(0,1) coordinate[label=above:$A$] (tr)
+(-1,1) coordinate[label=above:$S$] (tm)
++(0,-1) coordinate[dot, label=right:$h$] (h)
+(-2,2) coordinate[label=above:$H$] (tl)
+(0,-2) coordinate[label=below:$B$] (bot);
\draw 
(tr) -- (bot) 
(alpha.center) to[out=-180, in=-90] (tm)
(h.center) to[out=-180, in=-90] (tl);
\begin{scope}[on background layer]
\fill[catd] (bot) rectangle ($(tl) + (-1,0)$);
\fill[catc] (h.center) to[out=-180, in=-90] (tl) -- (tr) -- cycle;
\fill[catterm] (bot) rectangle ($(tr) + (1,0)$);
\end{scope}
\end{tikzpicture}
=
\begin{tikzpicture}[scale=0.5, baseline=(h)]
\path coordinate[dot, label=right:$h$] (h)
+(-1,0) coordinate (a)
+(-2,2) coordinate[label=above:$H$] (tl)
+(-1,2) coordinate[label=above:$S$] (tm)
+(0,2) coordinate[label=above:$A$] (tr)
++(0,-1) coordinate[dot, label=right:$\beta$] (beta)
+(0,-1) coordinate[label=below:$B$] (bot);
\draw (tr) -- (bot);
\draw[name path=curv1] (h.center) to[out=180, in=-90] (tl);
\draw[name path=curv2] (beta.center) to[out=180, in=-90] (a.center) -- (tm);
\path[name intersections={of=curv1 and curv2}]
    coordinate[dot, label=-135:$\lambda$] (lambda) at (intersection-1);
\begin{scope}[on background layer]
\fill[catd] (bot) rectangle ($(tl) + (-1,0)$);
\fill[catc] (h.center) to[out=-180, in=-90] (tl) -- (tr) -- cycle;
\fill[catterm] (bot) rectangle ($(tr) + (1,0)$);
\end{scope}
\end{tikzpicture}
\end{equation}
Our bimorphism~$h : H(A) \rightarrow B$ now has a structured input, and visually the mediating natural transformation
$\lambda$ allows us to ``cross wires'' so that we can slide the algebra over the bimorphism.

If~$(A,\alpha)$ is a~$S$-algebra, and~$(B,\beta)$ an~$T$-algebra, and~$\rho : S \circ G \nt G \circ T$, then~$h : A \rightarrow G(B)$ is a~right $\rho$-morphism if the following equation holds:
\begin{equation}
\label{eq:right-bimorphism-stringdiagrams}
\begin{tikzpicture}[stringdiagram, scale=0.5, baseline=(k)]
\path coordinate[dot, label=right:$h$] (k)
+(-2,-2) coordinate[label=below:$G$] (bl)
+(0,2) coordinate[label=above:$A$] (tr)
+(0,-2) coordinate[label=below:$B$] (br)
++(0,1) coordinate[dot, label=right:$\alpha$] (alpha)
+(-1,1) coordinate[label=above:$S$] (tl);
\draw 
(tr) -- (br)
(alpha.center) to[out=180, in=-90] (tl)
(k.center) to[out=180, in=90] (bl);
\begin{scope}[on background layer]
\fill[catterm] (br) rectangle ($(tr) + (1,0)$);
\fill[catd] (tr) rectangle ($(bl) + (-1,0)$);
\fill[catc] (k.center) to[out=180, in=90] (bl) -- (br) -- cycle;
\end{scope}
\end{tikzpicture}
=
\begin{tikzpicture}[stringdiagram, scale=0.5, baseline=(k)]
\path coordinate[dot, label=right:$h$] (k)
+(-2,-2) coordinate[label=below:$G$] (bl)
+(0,2) coordinate[label=above:$A$] (tr)
+(-1,2) coordinate[label=above:$S$] (tl)
+(0,-2) coordinate[label=below:$B$] (br)
+(-1,0) coordinate (a)
++(0,-1) coordinate[dot, label=right:$\beta$] (beta);
\draw (tr) -- (br);
\draw[name path=curv1] (k.center) to[out=180, in=90] (bl);
\draw[name path=curv2] (beta.center) to[out=180, in=-90] (a) -- (tl);
\path[name intersections={of=curv1 and curv2}]
    coordinate[dot, label=135:$\rho$] (kappa) at (intersection-1);
\begin{scope}[on background layer]
\fill[catterm] (br) rectangle ($(tr) + (1,0)$);
\fill[catd] (tr) rectangle ($(bl) + (-1,0)$);
\fill[catc] (k.center) to[out=180, in=90] (bl) -- (br) -- cycle;
\end{scope}
\end{tikzpicture}
\end{equation}
Now our bimorphism~$h : A \rightarrow G(B)$ has a structured output, and the mediating natural transformation~$\rho$ allows
us to cross wires, so we can slide the algebras over the bimorphism.

The visual nature of the string diagram renditions of the bimorphism conditions~\eqref{eq:left-bimorphism} and~\eqref{eq:right-bimorphism} as~\eqref{eq:left-bimorphism-stringdiagrams} and~\eqref{eq:right-bimorphism-stringdiagrams} gives a more intuitive sense for how the natural transformation is needed for the bimorphism to commute with the
algebra structures. 

Drawing the identity functor explicitly, as a dashed edge,
we see that the ordinary homomorphism condition is a special case of both of the above. Specializing
equation~\eqref{eq:left-bimorphism-stringdiagrams}:
\[
\begin{tikzpicture}[scale=0.5, baseline=(h)]
\path coordinate[dot, label=right:$\alpha$] (alpha)
+(0,1) coordinate[label=above:$A$] (tr)
+(-1,1) coordinate[label=above:$S$] (tm)
++(0,-1) coordinate[dot, label=right:$h$] (h)
+(-2,2) coordinate[label=above:$H$] (tl)
+(0,-2) coordinate[label=below:$B$] (bot);
\draw 
(tr) -- (bot) 
(alpha.center) to[out=-180, in=-90] (tm);
\draw[id functor] (h.center) to[out=-180, in=-90] (tl);
\begin{scope}[on background layer]
\fill[catc] (bot) rectangle ($(tl) + (-1,0)$);
\fill[catterm] (bot) rectangle ($(tr) + (1,0)$);
\end{scope}
\end{tikzpicture}
=
\begin{tikzpicture}[scale=0.5, baseline=(h)]
\path coordinate[dot, label=right:$h$] (h)
+(-1,0) coordinate (a)
+(-2,2) coordinate[label=above:$H$] (tl)
+(-1,2) coordinate[label=above:$S$] (tm)
+(0,2) coordinate[label=above:$A$] (tr)
++(0,-1) coordinate[dot, label=right:$\beta$] (beta)
+(0,-1) coordinate[label=below:$B$] (bot);
\draw (tr) -- (bot);
\draw[name path=curv1, id functor] (h.center) to[out=180, in=-90] (tl);
\draw[name path=curv2] (beta.center) to[out=180, in=-90] (a.center) -- (tm);
\path[name intersections={of=curv1 and curv2}]
    coordinate[id nt] (lambda) at (intersection-1);
\begin{scope}[on background layer]
\fill[catc] (bot) rectangle ($(tl) + (-1,0)$);
\fill[catterm] (bot) rectangle ($(tr) + (1,0)$);
\end{scope}
\end{tikzpicture}
\]
Similarly, specializing condition~\eqref{eq:left-bimorphism-stringdiagrams}:
\[
\begin{tikzpicture}[stringdiagram, scale=0.5, baseline=(k)]
\path coordinate[dot, label=right:$k$] (k)
+(-2,-2) coordinate[label=below:$K$] (bl)
+(0,2) coordinate[label=above:$A$] (tr)
+(0,-2) coordinate[label=below:$B$] (br)
++(0,1) coordinate[dot, label=right:$\alpha$] (alpha)
+(-1,1) coordinate[label=above:$S$] (tl);
\draw 
(tr) -- (br)
(alpha.center) to[out=180, in=-90] (tl);
\draw[id functor] (k.center) to[out=180, in=90] (bl);
\begin{scope}[on background layer]
\fill[catterm] (br) rectangle ($(tr) + (1,0)$);
\fill[catc] (tr) rectangle ($(bl) + (-1,0)$);
\end{scope}
\end{tikzpicture}
=
\begin{tikzpicture}[stringdiagram, scale=0.5, baseline=(k)]
\path coordinate[dot, label=right:$k$] (k)
+(-2,-2) coordinate[label=below:$K$] (bl)
+(0,2) coordinate[label=above:$A$] (tr)
+(-1,2) coordinate[label=above:$S$] (tl)
+(0,-2) coordinate[label=below:$B$] (br)
+(-1,0) coordinate (a)
++(0,-1) coordinate[dot, label=right:$\beta$] (beta);
\draw (tr) -- (br);
\draw[name path=curv1, id functor] (k.center) to[out=180, in=90] (bl);
\draw[name path=curv2] (beta.center) to[out=180, in=-90] (a) -- (tl);
\path[name intersections={of=curv1 and curv2}]
    coordinate[id nt] (kappa) at (intersection-1);
\begin{scope}[on background layer]
\fill[catterm] (br) rectangle ($(tr) + (1,0)$);
\fill[catc] (tr) rectangle ($(bl) + (-1,0)$);
\end{scope}
\end{tikzpicture}
\]
Diagrammatically, it is easy to see that if~$h : \alpha \klar{\lambda} \beta$ and~$g : \beta \klar{\lambda'} \gamma$, then $k \circ G(h) : \alpha \klar{\lambda' \circ G \lambda} \gamma$:
\[
\begin{tikzpicture}[stringdiagram, scale=0.5, baseline=(anchor)]
\path coordinate[dot, label=right:$\alpha$] (alpha)
+(0,1) coordinate[label=above:$A$] (tr)
+(-1,1) coordinate[label=above:$S$] (tm)
++(0,-1) coordinate[dot, label=right:$h$] (h)
+(-2,2) coordinate[label=above:$H$] (tl)
++(0,-2) coordinate[dot, label=right:$g$] (k)
+(-3,4) coordinate[label=above:$G$] (tll)
+(0,-2) coordinate[label=below:$C$] (bot);
\coordinate (anchor) at ($(h)!0.5!(k)$);
\draw 
(bot) -- (tr)
(alpha.center) to[out=180, in=-90] (tm)
(h.center) to[out=180, in=-90] (tl)
(k.center) to[out=180, in=-90] (tll);
\begin{scope}[on background layer]
\fill[catterm] (bot) rectangle ($(tr) + (1,0)$);
\fill[cate] (bot) rectangle ($(tll) + (-1,0)$);
\fill[catd] (k.center) to[out=180, in=-90] (tll) -- (tr) -- cycle;
\fill[catc] (h.center) to[out=180, in=-90] (tl) -- (tr) -- cycle;
\end{scope}
\end{tikzpicture}
=
\begin{tikzpicture}[stringdiagram, scale=0.5, baseline=(anchor)]
\path coordinate[dot, label=right:$h$] (h)
+(-1,0) coordinate (a)
+(0,2) coordinate[label=above:$A$] (tr)
+(-1,2) coordinate[label=above:$S$] (tm)
+(-2,2) coordinate[label=above:$H$] (tl)
++(0,-1) coordinate[dot, label=right:$\beta$] (beta)
++(0,-1) coordinate[dot, label=right:$g$] (k)
+(-3,4) coordinate[label=above:$G$] (tll)
+(0,-2) coordinate[label=below:$C$] (bot);
\coordinate (anchor) at ($(h)!0.5!(k)$);
\draw (bot) -- (tr);
\draw[name path=curv1] (h.center) to[out=180, in=-90] (tl);
\draw[name path=curv2] (beta.center) to[out=180, in=-90] (a.center) -- (tm);
\path[name intersections={of=curv1 and curv2}]
    coordinate[dot, label=-135:$\lambda$] (lambda) at (intersection-1);
\draw (k.center) to[out=180, in=-90] (tll);
\begin{scope}[on background layer]
\fill[catterm] (bot) rectangle ($(tr) + (1,0)$);
\fill[cate] (bot) rectangle ($(tll) + (-1,0)$);
\fill[catd] (k.center) to[out=180, in=-90] (tll) -- (tr) -- cycle;
\fill[catc] (h.center) to[out=180, in=-90] (tl) -- (tr) -- cycle;
\end{scope}
\end{tikzpicture}
=
\begin{tikzpicture}[stringdiagram, scale=0.5, baseline=(anchor)]
\path coordinate[dot, label=right:$h$] (h)
+(0,2) coordinate[label=above:$A$] (tr)
+(-1,2) coordinate[label=above:$S$] (tm)
+(-2,2) coordinate[label=above:$H$] (tl)
++(0,-2) coordinate[dot, label=right:$g$] (k)
+(-3,4) coordinate[label=above:$G$] (tll)
+(0,-2) coordinate[label=below:$C$] (bot)
+(-1,0) coordinate (a)
++(0,-1) coordinate[dot, label=right:$\gamma$] (gamma);
\coordinate (anchor) at ($(h)!0.5!(k)$);
\draw (bot) -- (tr);
\draw[name path=curv1] (h.center) to[out=180, in=-90] (tl);
\draw[name path=curv2] (k.center) to[out=180, in=-90] (tll);
\draw[name path=curv3] (gamma.center) to[out=180, in=-90] (a.center) -- (tm);
\path[name intersections={of=curv1 and curv3}]
    coordinate[dot, label=-135:$\lambda$] (lambda) at (intersection-1);
\path[name intersections={of=curv2 and curv3}]
    coordinate[dot, label=-135:$\lambda'$] (kappa) at (intersection-1);    
\begin{scope}[on background layer]
\fill[catterm] (bot) rectangle ($(tr) + (1,0)$);
\fill[cate] (bot) rectangle ($(tll) + (-1,0)$);
\fill[catd] (k.center) to[out=180, in=-90] (tll) -- (tr) -- cycle;
\fill[catc] (h.center) to[out=180, in=-90] (tl) -- (tr) -- cycle;
\end{scope}
\end{tikzpicture}
\]
As a special case of the above observation, we note that if~$m$ is a~left $\lambda$-morphism of type $\alpha \klar\lambda \beta$, and~$h : \alpha' \rightarrow \alpha$ and~$k : \beta \rightarrow \beta'$ are algebra morphisms, then~$k \circ m \circ H(h) : \alpha' \klar{\lambda} \beta'$. 


\section{Universal Constructions}
The results of this section generalize the lifting of monoidal structure in the case of
commutative monads discussed in section~\ref{sec:lifting-tensor-products}.
Probably the most explicit proofs for the analogous commutative monads constructions appear in~\cite{seal2012tensors}.
We wish to acknowledge this paper had great impact on our own approach.

\subsection{Classifying Objects for \texorpdfstring{$H_\lambda$-morphisms}{H lambda-morphisms}}

Let~$\MS : \CC \rightarrow \CC$ and $\MT : \CD \rightarrow \CD$ be monads, $H : \CC \to \CD$ a functor and \mbox{$\lambda : H(\MS(A)) \rightarrow \MT(H(A))$} a morphism in~$\CD$. An important feature of bimorphisms is that under sufficient cocompleteness conditions, they are in bijection with ordinary algebra homomorphisms of a suitable type.
Specifically, given an \mbox{$\MS$-algebra} $(A,\alpha)$, if the following coequalizer exists in~$\emc{\MT}{\CD}$:
\begin{equation}
    \label{eq:lifting-coeq-pair}
    \begin{tikzcd}
        \emfr{\MT}(H(\MS(A)))
        \ar[yshift=+0.5em]{rr}{\mu^\MT \circ \emfr{\MT}(\lambda)}
        \ar[swap,yshift=-0.5em]{rr}{\emfr{\MT}(H(\alpha))}
        &
        &
        \emfr{\MT}(H(A))
        \rar{q_\alpha}
        &
        (W_\alpha,\omega_\alpha)
    \end{tikzcd}
\end{equation}
then bimorphisms~$\alpha \klar{\lambda} \beta$
are in bijection with $\MT$-algebra morphisms $\omega_\alpha \to \beta$, with $(W_\alpha,\omega_\alpha)$ as defined in diagram~\eqref{eq:lifting-coeq-pair}.

This statement is made precise in the following theorem.
\begin{theorem}[Universal Bimorphisms]
\label{thm:universal-bimorphisms}
    Assuming the coequalizer in \eqref{eq:lifting-coeq-pair} exists, the composite morphism
    \[
        u := H(A) \xrightarrow{\eta^\MT} \emfr{\MT}(H(A)) \xrightarrow{q_\alpha} W_\alpha
    \]
    is a left $\lambda$-morphism $u : \alpha \klar\lambda \omega_\alpha$. Furthermore, it is universal in the sense that every every~$\MT$-algebra~$(B,\beta)$ and $h : \alpha \klar{\lambda} \beta$ there exists a unique~$\MT$-algebra morphism ${\widehat{h} : \omega_\alpha \to \beta}$ such that the following commutes:
\begin{equation*}
    \label{eq:universal-bimorphism-UP}
    \begin{tikzcd}
    H(A) \drar[swap]{h} \rar{u} & W_\alpha \dar{\widehat{h}} \\
    & B
    \end{tikzcd}
\end{equation*}
\end{theorem}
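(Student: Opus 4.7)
The plan is to exploit the free-forgetful adjunction $\emfr{\MT} \dashv \emfg{\MT}$, which gives a bijection between morphisms $HA \to B$ in $\CD$ and $\MT$-algebra morphisms $\emfr{\MT}(HA) \to (B,\beta)$, sending $h$ to its \emph{Kleisli extension} $\bar h := \beta \circ \MT(h)$. The strategy is then: (i) verify $u$ is a $\lambda$-morphism by precomposing the coequalizer equation with $\eta^\MT$; (ii) given a $\lambda$-morphism $h : \alpha \klar{\lambda} \beta$, show that its Kleisli extension $\bar h$ coequalizes the parallel pair of \eqref{eq:lifting-coeq-pair}, and obtain $\widehat h$ by the universal property of the coequalizer; (iii) verify the required triangle and uniqueness, both of which reduce to unit/counit bookkeeping.

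For step (i), the coequalizer equation reads $q_\alpha \circ \mu^\MT \circ \MT(\lambda) = q_\alpha \circ \MT(H\alpha)$. Precomposing this with $\eta^\MT_{H\MS A}$ and using naturality of $\eta^\MT$ (on both sides) together with the monad unit law $\mu^\MT \circ \eta^\MT_{\MT HA} = \id$ collapses it to
\[
q_\alpha \circ \lambda \;=\; q_\alpha \circ \eta^\MT_{HA} \circ H\alpha.
\]
Combining this with the fact that $q_\alpha : \emfr{\MT}(HA) \to (W_\alpha,\omega_\alpha)$ is a $\MT$-algebra morphism (so $\omega_\alpha \circ \MT q_\alpha = q_\alpha \circ \mu^\MT$) and once more the unit law, a direct computation shows $\omega_\alpha \circ \MT(u) \circ \lambda = u \circ H(\alpha)$, which is exactly the condition for $u : \alpha \klar{\lambda} \omega_\alpha$.

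For step (ii), I need to show $\bar h \circ \mu^\MT \circ \MT(\lambda) = \bar h \circ \MT(H\alpha)$ as morphisms $\MT H \MS A \to B$. The left-hand side rewrites, by naturality of $\mu^\MT$ and the algebra associativity law for $\beta$, as $\beta \circ \MT\bigl(\beta \circ \MT(h) \circ \lambda\bigr)$; but the bracketed expression equals $h \circ H(\alpha)$ precisely because $h$ is a $\lambda$-morphism, yielding the right-hand side. Hence the coequalizer universal property produces a unique $\MT$-algebra morphism $\widehat h : (W_\alpha,\omega_\alpha) \to (B,\beta)$ with $\widehat h \circ q_\alpha = \bar h$.

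For step (iii), the factorization $\widehat h \circ u = h$ follows by unfolding $u = q_\alpha \circ \eta^\MT$ and using $\bar h \circ \eta^\MT = \beta \circ \MT(h) \circ \eta^\MT = \beta \circ \eta^\MT \circ h = h$. For uniqueness, any $\MT$-algebra morphism $h' : (W_\alpha,\omega_\alpha) \to (B,\beta)$ with $h' \circ u = h$ has $h' \circ q_\alpha$ as an algebra morphism from $\emfr{\MT}(HA)$ to $(B,\beta)$ whose transpose equals $h$, so by the adjunction $h' \circ q_\alpha = \bar h = \widehat h \circ q_\alpha$, and since coequalizers are epi we conclude $h' = \widehat h$. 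I anticipate no serious obstacle; the only subtlety is ensuring that the coequalizer in $\emc{\MT}{\CD}$, rather than in $\CD$, is what is used, and that the naturality/unit juggling in step (i) is carried out in the correct order so as to actually reach the identity $q_\alpha \circ \lambda = q_\alpha \circ \eta^\MT \circ H\alpha$.
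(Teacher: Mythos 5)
Your proposal is correct and follows essentially the same route as the paper: verify the bimorphism property of $u$ from the coequalizer equation via unit laws and $\eta$-naturality, show the Kleisli extension $\beta \circ \MT(h)$ coequalizes the parallel pair using $\mu$-naturality, the algebra multiplication axiom and the bimorphism hypothesis, and then settle the triangle and uniqueness by unit bookkeeping and the fact that $q_\alpha$ is epi. The only cosmetic difference is that you phrase uniqueness through injectivity of transposition along $\emfr{\MT} \dashv \emfg{\MT}$, where the paper explicitly checks that $h \mapsto \widehat{h}$ and $k \mapsto k \circ u$ are mutually inverse.
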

\begin{proof}
Firstly, we must show~$u : H(A) \rightarrow W_\alpha$ is a bimorphism. We calculate:
\begin{eqproof}
\omega_\alpha \circ \MT(q_\alpha) \circ \MT(\eta) \circ \lambda
\explain{ $q_\alpha$ is a $\MT$-algebra homomorphism }
q_\alpha \circ \mu \circ T(\eta) \circ \lambda
\explain{ monad unit axioms }
q_\alpha \circ \mu \circ \eta \circ \lambda
\explain{ $\eta$-naturality }
q_\alpha \circ \mu \circ \MT(\lambda) \circ \eta
\explain{ coequalizer }
    q_\alpha \circ \MT(H(\alpha)) \circ \eta
\explain{ $\eta$-naturality }
q_\alpha \circ \eta \circ H(\alpha)
\end{eqproof}
We then note that if~$h : \alpha \klar{\lambda} \beta$ then $\beta \circ \eilmofree{\MT}(h)$ equalizes the parallel pair in diagram~\eqref{eq:lifting-coeq-pair}. Reasoning in the base category:
\begin{eqproof}
\beta \circ \MT(h) \circ \mu \circ \MT(\lambda)
\explain{ $\mu$-naturality }
\beta \circ \mu \circ \MT^2(h) \circ \MT(\lambda)
\explain{ Eilenberg-Moore algebra multiplication axiom }
\beta \circ \MT(\beta) \circ \MT^2(h) \circ \MT(\lambda)
\explain{ bimorphism assumption plus functoriality of~$\MT$ }
\beta \circ \MT(h) \circ \MT(H(\alpha))
\end{eqproof}
    Therefore by the coequalizer universal property, there is an induced algebra morphism~$\widehat{h} : \omega_\alpha \rightarrow \beta$ such that $\widehat h \circ q_\alpha = \beta \circ \emfr{\MT}(h)$. Conversely, given any algebra morphism~$k : \omega_\alpha \rightarrow \beta$, $k \circ u$ is a bimorphism, as bimorphisms compose and algebra morphisms are bimorphisms too (cf.\ the end of section~\ref{s:bim-graph}).

We aim to show the mappings~$h \mapsto \widehat{h}$ and~$k \mapsto k \circ u$ are mutually inverse. In one direction:
\begin{eqproof}
\widehat{h} \circ u
\explain{ definition }
\widehat{h} \circ q_\alpha \circ \eta
\explain{ coequalizer }
\beta \circ \MT(h) \circ \eta
\explain{ $\eta$-naturality }
\beta \circ \eta \circ h
\explain{ Eilenberg-Moore algebra unit axiom }
h
\end{eqproof}
In the other direction, as:
\begin{eqproof}
\widehat{k \circ u} \circ q_\alpha
\explain{ definition of $u$ and the coequalizer }
\beta \circ \MT(k) \circ \MT(q_\alpha) \circ \MT(\eta)
\explain{ algebra homomorphism }
k \circ \omega_\alpha \circ \MT(q_\alpha) \circ \MT(\eta)
\explain{ algebra homomorphism }
k \circ q_\alpha \circ \mu \circ \MT(\eta)
\explain{ monad unit axiom }
k \circ q_\alpha
\end{eqproof}
the coequalizer universal property completes the proof.
\end{proof}

Note that, in view of the discussion at the end of section~\ref{s:bim-graph}, the diagram in theorem~\ref{thm:universal-bimorphisms} expresses that the bimorphisms $h : \alpha \klar\lambda \beta$ decompose as the composition of bimorphisms $u : \alpha \klar\lambda \omega_\alpha$ and $\widehat h : \omega_\alpha \klar\id \beta$.


\begin{theorem}[Functorial Universal Bimorphisms]
\label{thm:functorial-universal-bimorphisms}
If $\lambda$ is in fact a natural transformation $H \circ \MS \nt \MT \circ H$, and the coequalizer~\eqref{eq:lifting-coeq-pair} exists at~$\alpha$, for every $\MS$-algebra $(A,\alpha)$, then~$\emklf{H}{\lambda}$ extends to a functor~$\emc{\MS}{\CC} \rightarrow \emc{\MT}{\CD}$.
\end{theorem}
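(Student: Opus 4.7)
The plan is to define $\emklf{H}{\lambda}$ on objects by $(A,\alpha) \mapsto (W_\alpha, \omega_\alpha)$ as constructed in Theorem~\ref{thm:universal-bimorphisms}, and to define its action on morphisms by exploiting the universal property of $u_\alpha$ together with the naturality of~$\lambda$. Given an $\MS$-algebra morphism $f : (A,\alpha) \to (A',\alpha')$, I consider the composite $u_{\alpha'} \circ H(f) : H(A) \to W_{\alpha'}$ and take $\emklf{H}{\lambda}(f)$ to be the unique $\MT$-algebra morphism factoring it through $u_\alpha$.

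The only step requiring calculation is verifying that $u_{\alpha'} \circ H(f)$ is itself a left $\lambda$-morphism of type $\alpha \klar{\lambda} \omega_{\alpha'}$. For this I use naturality of $\lambda$ to rewrite $\MT(H(f)) \circ \lambda_A = \lambda_{A'} \circ H(\MS(f))$, the $\MS$-algebra morphism axiom $f \circ \alpha = \alpha' \circ \MS(f)$ to obtain $H(\alpha') \circ H(\MS(f)) = H(f) \circ H(\alpha)$, and finally the bimorphism property of $u_{\alpha'}$. Alternatively, one can invoke the compositional observation at the end of section~\ref{s:bim-graph}: precomposing a left $\lambda$-morphism (via $H$) with an ordinary algebra morphism on its source yields another left $\lambda$-morphism. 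Either way, applying Theorem~\ref{thm:universal-bimorphisms} to this bimorphism produces the desired $\MT$-algebra morphism $\emklf{H}{\lambda}(f) : \omega_\alpha \to \omega_{\alpha'}$, characterised uniquely by the equation $\emklf{H}{\lambda}(f) \circ u_\alpha = u_{\alpha'} \circ H(f)$.

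Functoriality follows formally from this uniqueness. Both $\id_{W_\alpha}$ and $\emklf{H}{\lambda}(\id_A)$ satisfy $k \circ u_\alpha = u_\alpha \circ H(\id_A) = u_\alpha$, so they coincide. For composition, given $\MS$-algebra morphisms $f : \alpha \to \alpha'$ and $g : \alpha' \to \alpha''$, both $\emklf{H}{\lambda}(g \circ f)$ and $\emklf{H}{\lambda}(g) \circ \emklf{H}{\lambda}(f)$ satisfy $k \circ u_\alpha = u_{\alpha''} \circ H(g \circ f)$, and agree by uniqueness. The main (indeed only) non-formal obstacle is thus the bimorphism verification for $u_{\alpha'} \circ H(f)$, which is precisely where the upgrade from a mere indexed family of morphisms $\lambda$ to a genuine natural transformation becomes essential; everything else is a clean consequence of the universal property established in Theorem~\ref{thm:universal-bimorphisms}.
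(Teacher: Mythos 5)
Your proof is correct, but it is organized differently from the paper's. You derive the morphism action purely from the universal property of the classifying bimorphism $u_\alpha$ established in Theorem~\ref{thm:universal-bimorphisms}: you check that $u_{\alpha'} \circ H(f)$ is a left $\lambda$-morphism $\alpha \klar{\lambda} \omega_{\alpha'}$ (the computation using naturality of $\lambda$ at $f$, the algebra-morphism square for $f$, and the bimorphism property of $u_{\alpha'}$ goes through exactly as you sketch, and is indeed an instance of the composition observation at the end of section~\ref{s:bim-graph}), and then let uniqueness of the factorization do all the work for identities and composites. The paper instead proves a localized proposition at the level of the coequalizer presentation~\eqref{eq:lifting-coeq-pair}: it shows that $q_{\alpha'} \circ \eilmofree{\MT}(H(f))$ coequalizes the parallel pair defining $W_\alpha$ and takes $\emklf{H}{\lambda}(f)$ to be the induced fill-in, with functoriality again following from uniqueness of fill-ins. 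The two constructions agree, since a morphism $k$ satisfying $k \circ q_\alpha = q_{\alpha'} \circ \eilmofree{\MT}(H(f))$ automatically satisfies your equation $k \circ u_\alpha = u_{\alpha'} \circ H(f)$ (precompose with $\eta$ and use naturality), and Theorem~\ref{thm:universal-bimorphisms} makes that characterization unique. Your route is arguably cleaner and more conceptual -- it is the standard ``objects defined by a universal property assemble into a functor'' argument and never reopens the coequalizer. What the paper's version buys is the explicit description of $\emklf{H}{\lambda}(f)$ in terms of $q_\alpha$, which is reused later (e.g.\ in the naturality argument for Theorem~\ref{thm:free-construction}), and an explicit statement that only the single naturality square of $\lambda$ at $f$ is needed -- though your computation in fact uses no more than that either.
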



This theorem follows from the following localised version of this statement.

\begin{proposition}
If~$(A,\alpha)$ and~$(A',\alpha')$ are~$\MS$-algebras, the coequalizer~\eqref{eq:lifting-coeq-pair} exists at~$\alpha$ and $\alpha'$, and
there exist~$\lambda_A : H(\MS(A)) \rightarrow \MT(H(A))$ and $\lambda_{A'} : H(\MS(A')) \rightarrow \MT(H(A'))$,
every~$\emc{\MS}{\CC}$-morphism $f : \alpha \rightarrow \alpha'$ such that~$\lambda$ is natural with respect to~$f$, in that the following commutes:
\[
\begin{tikzcd}
H(\MS(A)) \rar{\lambda_A} \dar[swap]{H(\MS(f))} & \MT(H(A)) \dar{\MT(H(f)) } \\
H(\MS(A')) \rar[swap]{\lambda_{A'}} & \MT(H(A'))
\end{tikzcd}
\]
    induces a~$\emc{\MT}{\CD}$-morphism $\emklf{H}{\lambda}(f) : \emklf{H}{\lambda}(\alpha) \rightarrow \emklf{H}{\lambda}(\alpha)$. Moreover, $\emklf{H}{\lambda}(\id) = \id$ and $\emklf{H}{\lambda}(f \circ g) = \emklf{H}{\lambda}(f) \circ \emklf{H}{\lambda}(g)$ whenever $\emklf{H}{\lambda}(f)$ and $\emklf{H}{\lambda}(g)$ are induced this way.
\end{proposition}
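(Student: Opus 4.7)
The plan is to construct $\emklf{H}{\lambda}(f)$ by invoking the universal property of Theorem~\ref{thm:universal-bimorphisms} at $\alpha$. Writing $u : H(A) \to W_\alpha$ and $u' : H(A') \to W_{\alpha'}$ for the two universal bimorphisms, I would consider the composite $u' \circ H(f) : H(A) \to W_{\alpha'}$ and show that it is a left $\lambda$-morphism $\alpha \klar{\lambda} \omega_{\alpha'}$. Granting this, Theorem~\ref{thm:universal-bimorphisms} yields a unique $\MT$-algebra morphism $\omega_\alpha \to \omega_{\alpha'}$, which we take as $\emklf{H}{\lambda}(f)$ (so the codomain in the statement should read $\emklf{H}{\lambda}(\alpha')$).

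The key verification is that $u' \circ H(f)$ is a bimorphism out of $\alpha$. I would chase the bimorphism square using three ingredients in sequence. First, the $\MS$-algebra morphism property of $f$ rewrites $H(f) \circ H(\alpha)$ as $H(\alpha') \circ H(\MS(f))$. Next, the assumed bimorphism property of $u'$ converts $u' \circ H(\alpha')$ into $\omega_{\alpha'} \circ \MT(u') \circ \lambda_{A'}$. Finally, the localised naturality of $\lambda$ at $f$ slides $\lambda_{A'} \circ H(\MS(f))$ into $\MT(H(f)) \circ \lambda_A$. Absorbing $\MT(u') \circ \MT(H(f))$ as $\MT(u' \circ H(f))$ gives exactly the required identity $\omega_{\alpha'} \circ \MT(u' \circ H(f)) \circ \lambda_A = u' \circ H(f) \circ H(\alpha)$.

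For functoriality, the uniqueness half of Theorem~\ref{thm:universal-bimorphisms} does all the work. For identities, $u$ itself is a bimorphism $\alpha \klar{\lambda} \omega_\alpha$ whose unique lift is $\id_{\omega_\alpha}$; since $\emklf{H}{\lambda}(\id_\alpha)$ is by construction the lift of $u \circ H(\id_A) = u$, the two coincide. For composition, given composable $f$ and $g$ with the appropriate naturality squares, both $\emklf{H}{\lambda}(g) \circ \emklf{H}{\lambda}(f)$ and $\emklf{H}{\lambda}(g \circ f)$ are $\MT$-algebra morphisms whose precomposition with $u$ gives the same bimorphism $u'' \circ H(g) \circ H(f)$, so uniqueness forces them to agree.

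The only genuine obstacle is the bimorphism verification in the second paragraph, and the subtlety is that $\lambda$ is not assumed natural globally, only at $f$. This is exactly the input needed to commute $\MT(H(f))$ past $\lambda_A$; no stronger naturality is invoked, which is what makes the localised statement work.
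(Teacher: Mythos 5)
Your proposal is correct, and it takes a genuinely different route from the paper. The paper works directly with the coequalizer presentation: it shows that $q_{\alpha'} \circ \eilmofree{\MT}(H(f))$ coequalizes the parallel pair defining $W_\alpha$, via a diagram chase through $\mu$-naturality, $\MT$-functoriality, the local naturality square for $\lambda$ at $f$, the coequalizer identity at $\alpha'$, and the algebra-morphism property of $f$; the fill-in morphism is then $\emklf{H}{\lambda}(f)$, and identities and composition follow from uniqueness of coequalizer fill-ins. You instead reuse the universal property already established in theorem~\ref{thm:universal-bimorphisms}: you verify that $u' \circ H(f)$ is a left $\lambda_A$-morphism $\alpha \klar{\lambda} \omega_{\alpha'}$ (your three-step chase is correct and uses exactly the same three local hypotheses) and take its unique lift. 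The two constructions agree, since the paper's morphism satisfies $\emklf{H}{\lambda}(f) \circ q_\alpha = q_{\alpha'} \circ \MT(H(f))$, and precomposing with $\eta$ and using $\eta$-naturality shows it also satisfies your defining equation $\emklf{H}{\lambda}(f) \circ u = u' \circ H(f)$; your uniqueness clause then identifies them. Your approach is shorter and more conceptual --- it is essentially the observation at the end of section~\ref{s:bim-graph} that bimorphisms absorb algebra morphisms, combined with the classifying property --- at the cost of depending on theorem~\ref{thm:universal-bimorphisms} rather than being self-contained at the coequalizer level. You are also right that the codomain in the statement should read $\emklf{H}{\lambda}(\alpha')$; that is a typo in the paper.
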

\begin{proof}
In diagram~\eqref{eq:functoriality-construction} below, the two rows are coequalizer diagrams.
\begin{equation}
\label{eq:functoriality-construction}
    \begin{tikzcd}[row sep=1cm]
    \eilmofree{\MT}(H(\MT(A))) 
    \rar[yshift=0.5em]{\mu \circ \eilmofree{\MT}(\lambda_A)} 
    \rar[yshift=-0.5em, swap]{\eilmofree{\MT}(H(\alpha))}
    & \eilmofree{\MT}(H(A)) \dar{\eilmofree{\MT}(H(f))} \rar{q_\alpha} & (W_\alpha,\omega_\alpha) \\
    \eilmofree{\MT}(H(\MT(A'))) 
    \rar[yshift=0.5em]{\mu \circ \eilmofree{\MT}(\lambda_{A'})} 
    \rar[yshift=-0.5em, swap]{\eilmofree{\MT}(H(\alpha'))}
    & \eilmofree{\MT}(H(A')) \rar[swap]{q_{\alpha'}} & (W_{\alpha'},\omega_{\alpha'})
    \end{tikzcd}
\end{equation}
We wish to show that~$q_{\alpha'} \circ \eilmofree{\MT}(H(f))$ equalizes the parallel pair in the top of the diagram.
\begin{eqproof}
q_{\alpha'} \circ \MT(H(f)) \circ \mu \circ \MT(\lambda_A)
\explain{ $\mu$-naturality }
q_{\alpha'} \circ \mu \circ \MT^2(H(f)) \circ \MT(\lambda_A)
\explain{ $\MT$-functoriality }
q_{\alpha'} \circ \mu \circ \MT(\MT(H(f)) \circ \lambda_A))
\explain{ $\lambda$-naturality with respect to~$f$ }
q_{\alpha'} \circ \mu \circ \MT(\lambda_{A'} \circ H(\MS(f)))
\explain{ $\MT$-functoriality }
q_{\alpha'} \circ \mu \circ \MT(\lambda_{A'}) \circ \MT(H(\MS(f))))
\explain{ coequalizer }
q_{\alpha'} \circ \MT(H(\alpha')) \circ \MT(H(\MS(f))))
\explain{ functoriality twice }
q_{\alpha'} \circ \MT(H(\alpha' \circ \MS(f)))
\explain{ algebra homomorphism }
q_{\alpha'} \circ \MT(H(f \circ \alpha))
\explain{ functoriality twice }
q_{\alpha'} \circ \MT(H(f)) \circ \MT(H(\alpha))
\end{eqproof}
This therefore induces a morphism~$\emklf{H}{\lambda}(f) : (W_\alpha,\omega_\alpha) \rightarrow (W_{\alpha'}, \omega_{\alpha'})$. By the universal property of coequalizers, we clearly have~$\emklf{H}{\lambda}(\id) = \id$.
If we consider two algebra homomorphism~$f : \alpha \rightarrow \alpha'$ and~$g : \alpha' \rightarrow \alpha''$,
and morphisms~$\lambda_A, \lambda_{A'}$ and~$\lambda_{A'}$ which are natural with respect to~$f$ and~$g$, then consider
the diagram below:
\begin{equation*}
\label{eq:functoriality-compositionality}
    \begin{tikzcd}[row sep=1cm]
    \eilmofree{\MT}(H(\MT(A))) 
    \rar[yshift=0.5em]{\mu \circ \eilmofree{\MT}(\lambda_A)} 
    \rar[yshift=-0.5em, swap]{\eilmofree{\MT}(H(\alpha))}
    & \eilmofree{\MT}(H(A)) \dar{\eilmofree{\MT}(H(f))} \rar{q_\alpha} & (W_\alpha,\omega_\alpha) \\
    \eilmofree{\MT}(H(\MT(A'))) 
    \rar[yshift=0.5em]{\mu \circ \eilmofree{\MT}(\lambda_{A'})} 
    \rar[yshift=-0.5em, swap]{\eilmofree{\MT}(H(\alpha'))}
    & \eilmofree{\MT}(H(A')) \dar{\eilmofree{\MT}(H(g))} \rar{q_{\alpha'}} & (W_{\alpha'},\omega_{\alpha'}) \\
    \eilmofree{\MT}(H(\MT(A''))) 
    \rar[yshift=0.5em]{\mu \circ \eilmofree{\MT}(\lambda_{A''})} 
    \rar[yshift=-0.5em, swap]{\eilmofree{\MT}(H(\alpha''))}
    & \eilmofree{\MT}(H(A')) \rar[swap]{q_{\alpha'}} & (W_{\alpha'},\omega_{\alpha'})
    \end{tikzcd}
\end{equation*}
Then we must have~$\emklf{H}{\lambda}(g \circ f) = \emklf{H}{\lambda}(g) \circ \emklf{H}{\lambda}(f)$, as both are suitable fill-in morphisms satisfying the universal property of coequalizers from the top row to the bottom.
\end{proof}
\begin{theorem}[Universal Bimorphisms and Free Constructions]
\label{thm:free-construction}
If~$\lambda : H \circ \MS \nt \MT \circ H$ is a Kleisli law and satisfies the assumptions of theorem~\ref{thm:functorial-universal-bimorphisms}, then there is a natural isomorphism:
\begin{equation*}
    \label{eq:commuting-with-free-algebras}
    \eilmofree{\MT}(H(A)) \cong \emklf{H}{\lambda}(\eilmofree{\MS}(A))
\end{equation*}
\end{theorem}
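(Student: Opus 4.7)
The plan is to establish the isomorphism by comparing the universal properties of the two objects. By theorem~\ref{thm:universal-bimorphisms} applied to the free $\MS$-algebra $(\MS(A),\mu^\MS_A)$, the object $\emklf{H}{\lambda}(\eilmofree{\MS}(A))$ classifies left $\lambda$-morphisms out of $\mu^\MS_A$: $\MT$-algebra morphisms $\emklf{H}{\lambda}(\eilmofree{\MS}(A)) \to (B,\beta)$ correspond to bimorphisms $\mu^\MS_A \klar\lambda \beta$. On the other hand, by the free/forgetful adjunction for $\MT$, $\MT$-algebra morphisms $\eilmofree{\MT}(H(A)) \to (B,\beta)$ correspond to morphisms $H(A) \to B$ in $\CD$. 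So it suffices to establish a natural bijection between these two hom-sets and invoke the Yoneda lemma.

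For the bijection, I would define the two directions as follows. Given $g : H(A) \to B$, send it to $h_g := \beta \circ \MT(g) \circ \lambda_A : H(\MS(A)) \to B$. Conversely, given a bimorphism $h : \mu^\MS_A \klar\lambda \beta$, send it to $g_h := h \circ H(\eta^\MS_A) : H(A) \to B$. The first technical step is to verify that $h_g$ really is a bimorphism; expanding the condition $\beta \circ \MT(h_g) \circ \lambda_{\MS A} = h_g \circ H(\mu^\MS_A)$ reduces, via $\mu^\MT$-naturality and the Eilenberg-Moore algebra axiom $\beta \circ \MT(\beta) = \beta \circ \mu^\MT$, to exactly the Kleisli law multiplication axiom~\eqref{eq:kleisli-law-multiplication-axiom}.

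The second step is to check that the two assignments are mutually inverse. One direction, $g \mapsto h_g \mapsto g_{h_g}$, uses the Kleisli law unit axiom~\eqref{eq:kleisli-law-unit-axiom} (to rewrite $\lambda_A \circ H(\eta^\MS_A)$ as $\eta^\MT_{H(A)}$) followed by $\eta^\MT$-naturality and the Eilenberg-Moore unit axiom for $\beta$. The other direction, $h \mapsto g_h \mapsto h_{g_h}$, uses naturality of $\lambda$ with respect to $\eta^\MS_A$ to move $H(\eta^\MS_A)$ past $\lambda_A$, then applies the bimorphism equation for $h$, and finally collapses the composite $H(\mu^\MS_A \circ \MS(\eta^\MS_A))$ to the identity by the monad unit axiom for $\MS$.

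The final step is naturality of the resulting isomorphism in $A$. This follows because both sides of the correspondence are natural: the classifying-object assignment $\alpha \mapsto \emklf{H}{\lambda}(\alpha)$ is functorial by theorem~\ref{thm:functorial-universal-bimorphisms}, $\eilmofree{\MT} \circ H$ is manifestly functorial, and the explicit formulas $g \mapsto \beta \circ \MT(g) \circ \lambda_A$ and $h \mapsto h \circ H(\eta^\MS_A)$ are natural in $A$ by naturality of $\lambda$ and $\eta^\MS$. I do not expect any real obstacle: the content is essentially that the Kleisli law axioms are exactly the coherence needed to make the free-algebra case degenerate to the free $\MT$-algebra on $H(A)$. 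The most delicate calculation is verifying that $h_g$ is a bimorphism, which is where the multiplication axiom of the Kleisli law is consumed.
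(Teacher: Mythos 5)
Your argument is correct, and the three computations you single out (that $h_g$ is a bimorphism via the Kleisli multiplication axiom, and the two round-trip identities via the Kleisli unit axiom and naturality of $\lambda$ at $\eta^\MS$) all check out. However, you take a genuinely different route from the paper. You work at the level of representables: you identify both hom-functors $\mathrm{Hom}(\emklf{H}{\lambda}(\eilmofree{\MS}(A)),-)$ and $\mathrm{Hom}(\eilmofree{\MT}(H(A)),-)$ with the set of morphisms $H(A)\to B$ and conclude by Yoneda. The paper instead proves a sharper \emph{local} statement (proposition~\ref{prop:free-construction-local}): it exhibits $\mu^\MT\circ\eilmofree{\MT}(\lambda_A)$ directly as a coequalizer of the defining parallel pair, by showing it coequalizes (Kleisli multiplication axiom), that any coequalizing $f$ factors through it via precomposition with $\eilmofree{\MT}(H(\eta^\MS))$ (naturality of $\lambda$ at $\eta^\MS$), and that it is a split epimorphism (Kleisli unit axiom), so the factorization is unique. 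The same three ingredients are consumed at the same points in both proofs, but the paper's version buys something yours does not: it establishes that the coequalizer \emph{exists} at free algebras assuming only the pointwise Kleisli axioms at $A$ and naturality with respect to $\eta^\MS_A$, with no cocompleteness hypothesis, whereas your argument must assume the classifying object $\emklf{H}{\lambda}(\eilmofree{\MS}(A))$ already exists in order to invoke theorem~\ref{thm:universal-bimorphisms}. Under the stated hypotheses of theorem~\ref{thm:free-construction} this is harmless. Two routine points you leave implicit should be spelled out for completeness: the bijection must be natural in $(B,\beta)$ for Yoneda to apply (this holds because your formulas $g\mapsto\beta\circ\MT(g)\circ\lambda_A$ and $h\mapsto h\circ H(\eta^\MS_A)$ commute with post-composition by algebra morphisms, and the classification of theorem~\ref{thm:universal-bimorphisms} is $k\mapsto k\circ u$); and for naturality in $A$ you additionally need that the functorial action of $\emklf{H}{\lambda}$ from theorem~\ref{thm:functorial-universal-bimorphisms} is compatible with the universal bimorphisms, i.e.\ $\emklf{H}{\lambda}(\eilmofree{\MS}(f))\circ u_A=u_{A'}\circ H(\MS(f))$, which follows from $\emklf{H}{\lambda}(f)\circ q_\alpha=q_{\alpha'}\circ\eilmofree{\MT}(H(f))$ and $u=q\circ\eta^\MT$. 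With those additions your proof is complete.
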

\begin{proof}
Beyond what is proved in~\ref{prop:free-construction-local}, we must establish that the isomorphism is natural. So we require the following diagram to commute:
\begin{equation}
\label{eq:nat}
\begin{tikzcd}
    \widehat{H}(\eilmofree{\MS}(A)) \rar{\iota_A} \dar[swap]{\widehat{H}(\eilmofree{\MS}(h))} & \eilmofree{\MT}(H(A)) \dar{\eilmofree{\MS}(H(h))} \\
    \widehat{H}(\eilmofree{\MS}(B)) \rar[swap]{\iota_B} & \eilmofree{\MT}(H(B))
\end{tikzcd}
\end{equation}
Here the morphisms $\iota_A$ are the induced isomorphisms from proposition~\ref{prop:free-construction-local}. The following diagram commutes:
\[
\begin{tikzcd}
    \widehat{H}(\eilmofree{\MS}(A)) \dar[swap]{\widehat{H}(\eilmofree{\MS}(h))} & & 
    \arrow[ll, swap, "q_{\mu_A}"] \MT(H(\MS(A))) 
    \arrow[rr, "q_{\mu_A}"] 
    \dlar[swap]{\MT(H(\MS(h)))}
    \drar{\MT(\lambda_A)}
    & & \widehat{H}(\eilmofree{\MS}(A)) \dar{\iota_A} \\
    \widehat{H}(\eilmofree{\MS}(B)) \dar[swap]{\iota_B} & \lar{q_{\mu_B}} \drar[swap]{\MT(\lambda_B)} \MT(H(\MS(A))) & & \MT^2(H(A)) \rar[swap]{\mu_{H(A)}} \dlar{\MT^2(H(h))} & \eilmofree{\MT}(H(A))  \dar{\eilmofree{\MT}(H(h))} \\
    \eilmofree{\MT}(H(B)) & & \arrow[ll, "\mu_{H(B)}"] \MT(H(B)) \arrow[rr, swap, "\mu_{H(B)}"] & & \eilmofree{\MT}(H(B))
\end{tikzcd}
\]
The diamond and the bottom right trapezium commute by naturality, and the remaining parts by the definitions of the coequalizer constructions involved. As the paths from the centre top of the diagram to the bottom corners are equal, both the left and right hand side are candidate coequalizer fill-in morphisms, induced by the same morphism. By the universal property of coequalizers, both the  sides of the diagram, and hence both paths around diagram~\eqref{eq:nat}, are equal.
\end{proof}
Again, we recover the construction from a local version.
\begin{proposition}
\label{prop:free-construction-local}
If
\[ \lambda_A : H(\MS(A) \rightarrow \MT(H(A)) \quad\mbox{ and }\quad \lambda_{\MS(A)} : H(\MS^2(A)) \rightarrow \MT(H(\MS(A)) \] 
are $\CD$-morphisms satisfying Kleisli axioms~\eqref{eq:kleisli-law-unit-axiom}
    and~\eqref{eq:kleisli-law-multiplication-axiom} at~$A$\footnote{That is, $\eta^\MT_{H(A)} = \lambda_A \circ H(\eta^\MS_A)$ and $\mu^\MT_{H(A)} \circ \MT(\lambda_A) \circ \lambda_{\MS(A)} = \lambda_A \circ H(\mu^\MS_A)$.} and natural with respect to $\eta^\MS_A\colon A \to \MS(A)$,
then the coequalizer defining~$\emklf{H}{\lambda}(\eilmofree{\MT}(A))$ exists, and is isomorphic to~$\eilmofree{\MT}(H(A))$.
\end{proposition}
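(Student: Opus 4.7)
The plan is to exhibit the free $\MT$-algebra $\eilmofree{\MT}(H(A))$, together with the coequalizing morphism
\[
q := \mu^\MT_{H(A)} \circ \MT(\lambda_A) : \eilmofree{\MT}(H(\MS(A))) \to \eilmofree{\MT}(H(A)),
\]
as the coequalizer in $\emc{\MT}{\CD}$ of the pair $\mu^\MT \circ \eilmofree{\MT}(\lambda_{\MS(A)})$ and $\eilmofree{\MT}(H(\mu^\MS_A))$. Note that $q$ is the unique free extension of $\lambda_A$, so it is automatically a $\MT$-algebra morphism, which frees me from worrying about algebra structure when checking the universal property.

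First I would verify that $q$ coequalizes the pair. This is a short chase using $\mu^\MT$-naturality and $\MT$-functoriality to push $\MT(\lambda_A)$ past $\mu^\MT_{H(\MS(A))}$, then the associativity of $\mu^\MT$, and finally the Kleisli multiplication axiom~\eqref{eq:kleisli-law-multiplication-axiom} localised at $A$, which rewrites $\mu^\MT_{H(A)} \circ \MT(\lambda_A) \circ \lambda_{\MS(A)}$ as $\lambda_A \circ H(\mu^\MS_A)$. Collecting these rewrites yields $\mu^\MT_{H(A)} \circ \MT(\lambda_A \circ H(\mu^\MS_A))$, which is the other leg after $\MT$-functoriality.

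For the universal property, given any coequalizing $\MT$-algebra morphism $k : \eilmofree{\MT}(H(\MS(A))) \to (B, \beta)$, I would set $\widetilde k := k \circ \MT(H(\eta^\MS_A))$, which is an $\emc{\MT}{\CD}$-morphism $\eilmofree{\MT}(H(A)) \to (B, \beta)$. The factorization $\widetilde k \circ q = k$ follows by using $\mu^\MT$-naturality to slide $\MT(H(\eta^\MS_A))$ across $\mu^\MT_{H(A)}$, $\MT$-functoriality to combine it with $\MT(\lambda_A)$, then the assumed naturality of $\lambda$ at $\eta^\MS_A$ to replace $\MT(H(\eta^\MS_A)) \circ \lambda_A$ by $\lambda_{\MS(A)} \circ H(\MS(\eta^\MS_A))$, the coequalizing property of $k$ to swap $\mu^\MT \circ \MT(\lambda_{\MS(A)})$ for $\MT(H(\mu^\MS_A))$, and finally the $\MS$-unit axiom $\mu^\MS_A \circ \MS(\eta^\MS_A) = \id$.

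The step I expect to be most delicate is uniqueness, since we are working with only localised data rather than a global Kleisli law. The key observation is that any fill-in $\widetilde k' : \eilmofree{\MT}(H(A)) \to (B,\beta)$ is a $\MT$-algebra morphism out of a free $\MT$-algebra, hence determined by $\widetilde k' \circ \eta^\MT_{H(A)} : H(A) \to B$. Precomposing $\widetilde k' \circ q = k$ with $\eta^\MT_{H(\MS(A))}$ and using $\eta^\MT$-naturality and the $\MT$-unit axiom pins down $\widetilde k' \circ \lambda_A = k \circ \eta^\MT_{H(\MS(A))}$; then precomposing with $H(\eta^\MS_A)$ and invoking the Kleisli unit axiom~\eqref{eq:kleisli-law-unit-axiom} at $A$, $\lambda_A \circ H(\eta^\MS_A) = \eta^\MT_{H(A)}$, forces $\widetilde k' \circ \eta^\MT_{H(A)} = k \circ \eta^\MT_{H(\MS(A))} \circ H(\eta^\MS_A)$, which matches the value for our $\widetilde k$ and establishes the claim.
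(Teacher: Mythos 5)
Your proposal is correct and follows essentially the same route as the paper: the same coequalizing map $q = \mu^\MT \circ \MT(\lambda_A)$, the same candidate fill-in $k \circ \MT(H(\eta^\MS_A))$, and the same chains of rewrites for both the coequalizing and factorization computations. The only divergence is in uniqueness, where the paper simply observes that $q$ is a split epimorphism with section $\MT(H(\eta^\MS_A))$ (via the Kleisli unit axiom at $A$ and the monad unit law), whereas you deduce it from the freeness of $\eilmofree{\MT}(H(A))$; both are valid, the paper's being marginally shorter.
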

\begin{proof}
We aim to show that~\eqref{eq:free-coequalizer} is a coequalizer diagram:
\begin{equation}
\label{eq:free-coequalizer}
    \begin{tikzcd}
    \eilmofree{\MT}(H(\MS^2(A))) 
    \rar[yshift=0.5em]{\mu \circ \eilmofree{\MT}(\lambda_{\MS(A)})}
    \rar[yshift=-0.5em, swap]{ \eilmofree{\MT}(H(\mu))}
    & \eilmofree{\MT}(H(\MS(A))) \rar{\eilmofree{\MT}(\lambda_A)}
    & \eilmofree{\MT}(\MT(H(A))) \rar{\mu}
    & \eilmofree{\MT}(H(A))
    \end{tikzcd}
\end{equation}
Firstly, to see~$\mu \circ \eilmofree{\MT}(\lambda_A)$ coequalizes the parallel pair:
\begin{eqproof}
\mu \circ \MT(\lambda_A) \circ \mu \circ \MT(\lambda_{\MS(A)})
\explain{ $\mu$-naturality }
\mu \circ \mu \circ \MT^2(\lambda_A) \circ \MT(\lambda_{\MS(A)})
\explain{ monad associativity axiom }
\mu \circ \MT(\mu) \circ \MT^2(\lambda_A) \circ \MT(\lambda_{\MS(A)})
\explain{ $\MT$-functoriality }
\mu \circ \MT(\mu \circ \MT(\lambda_A) \circ \lambda_{\MS(A)})
\explain{ Kleisli axiom \eqref{eq:kleisli-law-multiplication-axiom} at~$A$}
\mu \circ \MT(\lambda_A \circ H(\mu))
\explain{ $\MT$-functoriality }
\mu \circ \MT(\lambda_A) \circ \MT(H(\mu))
\end{eqproof}
If algebra morphism~$f : \eilmofree{\MT}(H(\MS(A)) \rightarrow (B,\beta)$ coequalizes the
parallel pair in diagram~\eqref{eq:free-coequalizer} then~$f = f \circ \eilmofree{\MT}(H(\eta)) \circ \mu \circ \eilmofree{\MT}(\lambda_A)$ as:
\begin{eqproof}
f \circ \MT(H(\eta)) \circ \mu \circ \MT(\lambda_A)
\explain{ $\mu$-naturality }
f \circ \mu \circ \MT^2(H(\eta)) \circ \MT(\lambda_A)
\explain{ $\MT$-functoriality }
f \circ \mu \circ \MT(\MT(H(\eta)) \circ \lambda_A)
\explain{ naturality of $\lambda$ wrt $\eta$ }
f \circ \mu \circ \MT(\lambda_{\MS(A)} \circ H(\MS(\eta)))
\explain{ $\MT$-functoriality }
f \circ \mu \circ \MT(\lambda_{\MS(A)}) \circ \MT(H(\MS(\eta)))
\explain{ $f$ coequalizes $\mu \circ \MT(\lambda_{\MS(A)})$ and $\MT(H(\mu))$}
f \circ \MT(H(\mu)) \circ \MT(H(\MS(\eta)))
\explain{ functoriality and  monad unit axiom }
f
\end{eqproof}
We then note that~$\mu \circ \MT(\lambda_A)$ is a split epimorphism, as:
\begin{eqproof}
\mu \circ \MT(\lambda_A) \circ \MT(H(\eta))
\explain{ $\MT$-functoriality }
\mu \circ \MT(\lambda_A \circ H(\eta))
\explain{ Kleisli axiom \eqref{eq:kleisli-law-unit-axiom} at~$A$ }
\mu \circ \MT(\eta)
\explain{ monad unit axiom }
\id
\end{eqproof}
Therefore, diagram~\eqref{eq:free-coequalizer} satisfies the universal property of a coequalizer diagram.
\end{proof}


By theorem~\ref{thm:free-construction}, we may view $\emklf{H}{\lambda}$ as the lifting of $\klf{H}{\lambda}$ to the category of algebras, as depicted in the following commutative diagram (up to isomorphism). This is gives us a generalised version of theorem~\ref{thm:lifting-smc-structure}.
\[
    \begin{tikzcd}
        \emc{\MS}{\CC} \rar{\emklf{H}{\lambda}} & \emc{\MT}{\CD} \\
        \klc{\MS}{\CC} \uar[pos=0.45]{K} \rar{\klf{H}{\lambda}} & \klc{\MT}{\CD} \uar[swap,pos=0.45]{K} \\
        \CC \ar[bend left=60]{uu}{\emfr{\MS}}\rar{H}\uar{\klfr{\MS}} & \CD \uar[swap]{\klfr{\MT}} \ar[swap,bend right=60]{uu}{\emfr{\MT}}
    \end{tikzcd}
\]

Observe that under the assumptions that $\lambda$ is a Kleisli law, the coequalizer in~\eqref{eq:lifting-coeq-pair} is in fact a coequalizers of reflexive pairs. Indeed, $\MT(H(\eta))$ is always a section of $\MT(H(\alpha))$ and by~\eqref{eq:kleisli-law-unit-axiom} the same holds for $\mu^\MT \circ \emfr{\MT}(\lambda)$. Although the existences of coequalizers in Eilenberg-Moore categories is by no means automatic, standard conditions under which they do exist are known. See for example the accounts in \cite{barr2000toposes}, \cite[Chapter 4]{borceux1994handbook}, and~\cite[Chapter 5]{pedicchio2004categorical}.



\begin{example}[Lifting Binary Coproducts]
Let $\MT$ a monad on a category~$\CC$ with binary coproduct. For every pair $A$, $B$ of $\CC$ objects, there is a canonical morphism:
\[
[\MT(\kappa_1), \MT(\kappa_2)] : T(A) + T(B) \rightarrow T(A + B) ,
\]
where the~$\kappa_i$ are the coproduct injections, and~$[f,g]$ is the morphism induced by~$f$ and~$g$ by the coproduct universal property. This is clearly natural in~$A$ and~$B$. In fact it is a Kleisli morphism. For the unit axiom we calculate:
\begin{eqproof}
[\MT(\kappa_1), \MT(\kappa_2)] \circ \eta + \eta
\explain{ coproducts }
[\MT(\kappa_1) \circ \eta, \MT(\kappa_2) \circ \eta]
\explain{ naturality }
[\eta \circ \kappa_1, \eta \circ \kappa_2]
\explain{ coproducts }
\eta \circ [\kappa_1, \kappa_2]
\explain{ coproducts }
\eta
\end{eqproof}
For the multiplication axiom:
\begin{eqproof}
[\MT(\kappa_1), \MT(\kappa_2)] \circ \mu + \mu
\explain{ coproducts }
[\MT(\kappa_1) \circ \mu, \MT(\kappa_2) \circ \mu]
\explain{ naturality }
[\mu \circ \MT^2(\kappa_1), \mu \circ \MT^2(\kappa_2)]
\explain{ coproducts }
\mu \circ [\MT^2(\kappa_1), \MT^2(\kappa_2)]
\explain{ coproducts }
\mu \circ [\MT[\MT(\kappa_1), \MT(\kappa_2)] \circ \MT(\kappa_1), \MT[\MT(\kappa_1), \MT(\kappa_2)] \circ \MT(\kappa_2)]
\explain{ coproducts }
\mu \circ \MT[\MT(\kappa_1), \MT(\kappa_2)] \circ [\MT(\kappa_1), \MT(\kappa_2)]
\end{eqproof}
The bimorphism condition with respect to the canonical morphism is:
\[
\begin{tikzcd}[column sep=1.75cm]
\MT(A) + \MT(B) \dar[swap]{\alpha + \beta} \rar{[\MT(\kappa_1) + \MT(\kappa_2)]} & \MT(A + B) \rar{\MT(h)} & \MT(C) \dar{\gamma} \\
A + B \arrow[rr, swap, "h"] & & C
\end{tikzcd}
\]
We can define~$h_1 = h \circ \kappa_1$ and~$h_2 = h \circ \kappa_2$, and by the universal property of coproducts, $h = [h_1,h_2]$, so we can rewrite our diagram as:
\[
\begin{tikzcd}[column sep=1.75cm]
\MT(A) + \MT(B) \dar[swap]{\alpha + \beta} \rar{[\MT(\kappa_1) + \MT(\kappa_2)]} & \MT(A + B) \rar{\MT([h_1,h_2])} & \MT(C) \dar{\gamma} \\
A + B \arrow[rr, swap, "{[h_1, h_2]}"] & & C
\end{tikzcd}
\]
which commutes if and only if~$h_1 : \alpha \rightarrow \gamma$ and~$h_2 : \beta \rightarrow \gamma$. If~$\emc{\MT}{\CC}$ has coequalizers of reflexive pairs, we have bijective correspondences:
\begin{prooftree}
\AxiomC{$h_1 : \alpha \rightarrow \gamma$}
\AxiomC{$h_2 : \beta \rightarrow \gamma$}
\doubleLine
\BinaryInfC{$h : (\alpha,\beta) \klar{[\MT \kappa_1, \MT \kappa_2]} \gamma$}
\doubleLine
\UnaryInfC{$\alpha \widehat{+} \beta \rightarrow \gamma$}
\end{prooftree}
This establishes that~$\widehat{+}$ is the coproduct in~$\emc{\MT}{\CC}$. 
\end{example}


\subsection{Classifying Objects for \texorpdfstring{$H^\rho$-morphisms}{H rho-morphisms}}
Given the straightforward functor action induced by Eilenberg-Moore laws~\eqref{eq:eilenberg-moore-law-functor-action}, relating right bimorphisms with respect to an Eilenberg-Moore law
\[ \rho : \MS \circ G \nt G \circ \MT \]
to ordinary algebra morphisms is relatively trivial. We observe that for algebras~$(A,\alpha)$ and~$(B,\beta)$, the following are equivalent for a morphism~$h : A \rightarrow G(B)$:
\begin{enumerate}
    \item $h$ is an algebra morphism $\alpha \rightarrow \emf{G}{\rho}(\beta)$.
    \item $h$ a right ${\rho}$-morphism $\alpha \emar{\rho} \beta$.
\end{enumerate}
Observe that the identity $\id : G(B) \to G(B)$ is a bimorphism $\emf{G}{\rho}(\beta) \to^\rho \beta$.
Consequently, any bimorphism $h : \alpha \to^\rho \beta$ is equal to the composition of bimorphisms:
\[
    \alpha \xrightarrow{\enspace h\enspace}{\!}^\id \enspace\emf{G}\rho(\beta)\enspace \xrightarrow{\enspace\id\enspace}{\!}^\rho \enspace\beta
\]
Note that this is true even in the more general setting where $T$ and $S$ are only required to be endofunctors and $\rho$ is just a natural transformation inducing a functor $\emf{G}{\rho} : \alg T \to \alg S$ between categories of endofunctor algebras.





\section{Dualizing to Comonads}
\label{sec:comonads}
Naturally, as with every categorical concept, the notions introduced in the previous sections dualize. Although dualizing is routine, we provide explicit description of the main results for concreteness.
The results for Kleisli laws of comonads have applications in the emerging theory of \emph{game comonads}~\cite{abramsky2021relating}, which motivated our original investigations.

First, we dualize the notion bimorphism suitable for endofunctor coalgebras. Because of dualizing, the Kleisli laws are stated in terms of right instead of left bimorphisms. Namely, given functors $G : \CC \to \CD$, $C : \CC \to \CC$ and $D : \CD \to \CD$ and a morphism $\rho : D(G(B)) \to G(C(B))$ in $\CD$, we say that $h\colon A\to G(B)$ is a \df{(coalgebraic right) $\rho$-morphisms}, or just \df{bimorphism}, from a $D$-coalgebra $(A,\alpha)$ to a $C$-coalgebra $(B,\beta)$, if it makes the following diagram commute.
\begin{equation}
\begin{tikzcd}
    A \ar[rr, "h"] \dar[swap]{\alpha} & & G(B)  \dar{G\beta}
    \\
    D(A)  \rar{D(h)} & D(G(B)) \rar{\rho} & G(C(B))
\end{tikzcd}
\end{equation}

Further, if $\MC, \MD$ are comonads then a natural transformation $\rho : \MD \circ G \nt G \circ \MC$ is a \df{Kleisli law} if the following commute:
\begin{equation*}
\label{eq:co-kleisli-law-unit-axiom}
\begin{tikzcd}[column sep=0.5em]
    \MD(G(A))
        \ar{rr}{\rho}
        \ar[swap]{dr}{\epsilon}
    &
    & G(\MC(A))
        \ar{dl}{K\epsilon}
    \\
    & G(A)
\end{tikzcd}
\end{equation*}
\begin{equation*}
\label{eq:co-kleisli-law-multiplication-axiom}
\begin{tikzcd}
    \MD(G(A)) \ar[rr, "\rho"] \dar[swap]{\delta} & & G(\MC(A))  \dar{G\delta}
    \\
    \MD^2(G(A)) \rar{\MD(\rho)} & \MD(G(\MC(A))) \rar{\rho} & G(\MC^2(A))
\end{tikzcd}
\end{equation*}

Then, the dual situation of comonads behaves dually to that of monads. In particular, theorem~\ref{thm:universal-bimorphisms} dualises as:
\begin{theorem}
    For comonads $\MC,\MD$ and a functor $G$ as above, a $\MC$-coalgebra $(A,\alpha)$ and a morphism $\rho : \MD(G(A)) \to G(\MC(A))$, assume that the following diagram is an equalizer in $\coemc{\MD}{\CD}$:
    \begin{equation}
        \label{eq:colifting-coeq-pair}
    \begin{tikzcd}
        (W_\alpha,\omega_\alpha)
        \rar
        &
        \emfr{\MD}(G(A))
        \ar[yshift=+0.5em]{rr}{\emfr{\MD}(\lambda)\circ \delta^\MT}
        \ar[swap,yshift=-0.5em]{rr}{\emfr{\MD}(G(\alpha))}
        &
        &
        \emfr{\MD}(G(\MC(A)))
    \end{tikzcd}
    \end{equation}
    Then, there exists a $\rho$-morphism $u\colon W_\alpha \to G(A)$ from $\omega_\alpha$ to $\alpha$ such that,
    for every $\rho$-morphism $h : B \to G(A)$ from a $\MD$-coalgebra $\beta$ to $\alpha$, there exists a coalgebra morphism $\widehat h : \beta \to \omega_\alpha$ such that $h = u \circ \widehat h$.
\end{theorem}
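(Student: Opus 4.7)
The plan is to dualize the proof of theorem~\ref{thm:universal-bimorphisms} step by step. Let $e_\alpha : (W_\alpha,\omega_\alpha) \to \emfr{\MD}(G(A))$ denote the equalizing morphism in diagram~\eqref{eq:colifting-coeq-pair}. I define the candidate bimorphism $u : W_\alpha \to G(A)$ as the composite $u := \epsilon^\MD \circ e_\alpha$, dualizing the original choice $u := q_\alpha \circ \eta^\MT$. Recall that the cofree $\MD$-coalgebra on $X$ is $\emfr{\MD}(X) = \MD(X)$ with comultiplication $\delta$ as structure map, so coalgebra morphisms into $\emfr{\MD}(X)$ correspond by transposition to ordinary morphisms into $X$; this is exactly the duality of the free/forgetful adjunction used in the original proof.

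First I would verify that $u : \omega_\alpha \emar{\rho} \alpha$, by a calculation that mirrors the first part of the original proof: use that $e_\alpha$ is a $\MD$-coalgebra morphism, apply the equalizing property of $e_\alpha$ to replace $\emfr{\MD}(G(\alpha))$ with $\emfr{\MD}(\rho) \circ \delta$, and conclude by naturality of $\epsilon^\MD$ together with the comonad counit axiom. The second step is the universal property: given a bimorphism $h : \beta \emar{\rho} \alpha$, form its transpose $\MD(h) \circ \beta : B \to \emfr{\MD}(G(A))$, which is automatically a $\MD$-coalgebra morphism out of $\beta$. The key computation is to show that $\MD(h) \circ \beta$ equalizes the parallel pair in diagram~\eqref{eq:colifting-coeq-pair}; this uses the bimorphism condition $G(\alpha) \circ h = \rho \circ \MD(h) \circ \beta$ together with naturality of $\delta$ and the coassociativity axiom, and is the dual of the verification that $\beta \circ \emfr{\MT}(h)$ coequalizes the analogous pair in the monad proof. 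It is the only step that requires real bookkeeping, though it is entirely routine once the correspondence with the original calculation is set up. The equalizer universal property then yields a unique coalgebra morphism $\widehat{h} : \beta \to \omega_\alpha$ with $e_\alpha \circ \widehat{h} = \MD(h) \circ \beta$.

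To finish, I check the factorization $u \circ \widehat{h} = h$: expanding gives $u \circ \widehat{h} = \epsilon^\MD \circ e_\alpha \circ \widehat{h} = \epsilon^\MD \circ \MD(h) \circ \beta = h \circ \epsilon^\MD \circ \beta = h$, by naturality of $\epsilon^\MD$ and the coalgebra counit axiom for $\beta$. Uniqueness of $\widehat{h}$ as a coalgebra morphism with $u \circ \widehat{h} = h$ follows from the uniqueness in the equalizer universal property, since any coalgebra morphism $k : \beta \to \omega_\alpha$ with $u \circ k = h$ must satisfy $e_\alpha \circ k = \MD(h) \circ \beta$, using that $e_\alpha$ is the transpose of $u$. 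One could equivalently present the whole argument, as in the proof of theorem~\ref{thm:universal-bimorphisms}, by exhibiting the mutually inverse assignments $h \mapsto \widehat{h}$ and $k \mapsto u \circ k$ between bimorphisms $\beta \emar{\rho} \alpha$ and coalgebra morphisms $\beta \to \omega_\alpha$; the two verifications dualize verbatim.
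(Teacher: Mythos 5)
Your proposal is correct and takes essentially the same route as the paper, which establishes this theorem purely by formal duality with Theorem~\ref{thm:universal-bimorphisms}; your writeup is exactly that dualization made explicit (including the right reading of the paper's typo $\emfr{\MD}(\lambda)\circ\delta^{\MT}$ as $\emfr{\MD}(\rho)\circ\delta^{\MD}$), with $u=\epsilon^{\MD}\circ e_\alpha$ dual to $q_\alpha\circ\eta^{\MT}$ and the transpose $\MD(h)\circ\beta$ dual to $\beta\circ\emfr{\MT}(h)$. All the individual verifications you outline dualize correctly, so nothing further is needed.
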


And formally dualizing of theorems~\ref{thm:functorial-universal-bimorphisms} and~\ref{thm:free-construction} into a single theorem for conciseness, gives us:
\begin{theorem}
\label{thm:colifting-and-cofree}
    If $\rho : \MD(G(A)) \nt G(\MC(A))$ is a natural transformation, and all coequalizers of the form~\eqref{eq:colifting-coeq-pair} exist, then there exists a functor $\emklf{G}{\rho} : \coemc{\MC}{\CC} \rightarrow \coemc{\MD}{\CD}$.

    If, furthermore, $\lambda$ is a Kleisli law, then there is a natural isomorphism:
    \begin{equation}
        \eilmofree{\MD}(G(A)) \cong \emklf{G}{\rho}(\eilmofree{\MC}(A))
    \end{equation}
\end{theorem}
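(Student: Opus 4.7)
The plan is to obtain both claims by formal dualization of Theorems~\ref{thm:functorial-universal-bimorphisms} and~\ref{thm:free-construction}. Under the duality $\CC \mapsto \CC^{op}$, a comonad on $\CC$ corresponds to a monad on $\CC^{op}$, the category $\coemc{\MC}{\CC}$ corresponds to $\emc{\MC^{op}}{\CC^{op}}$, and the equalizer diagram~\eqref{eq:colifting-coeq-pair} corresponds to the coequalizer~\eqref{eq:lifting-coeq-pair} in the opposite category. Every step in the proofs of the monadic versions uses only finite (co)limits, (co)monad axioms, and manipulation of bimorphism squares, all of which dualize symbol-for-symbol via the dictionary $\mu \leftrightarrow \delta$, $\eta \leftrightarrow \epsilon$, algebra $\leftrightarrow$ coalgebra, coequalizer fill-in $\leftrightarrow$ equalizer fill-in.

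For the existence of the functor $\emklf{G}{\rho}$, I would dualize the proof of Theorem~\ref{thm:functorial-universal-bimorphisms}. Given a $\MC$-coalgebra morphism $f : (A,\alpha) \to (A',\alpha')$, I would stack two equalizer diagrams of the form~\eqref{eq:colifting-coeq-pair} indexed by $\alpha$ and $\alpha'$, with $\eilmofree{\MD}(G(f))$ as the connecting vertical. Naturality of $\rho$ with respect to $f$ together with the coalgebra-morphism axiom for $f$ shows that $\eilmofree{\MD}(G(f)) \circ u_{\alpha}$ equalizes the parallel pair for $\alpha'$, hence factors uniquely as $u_{\alpha'} \circ \emklf{G}{\rho}(f)$. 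Preservation of identities is immediate, and compositionality follows from uniqueness in the equalizer universal property, exactly as in the original proof.

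For the cofree construction, I would dualize Proposition~\ref{prop:free-construction-local}: under the Kleisli-law axioms at $A$ and naturality of $\rho$ with respect to $\epsilon^{\MC}_A$, the equalizer defining $\emklf{G}{\rho}(\eilmofree{\MC}(A))$ exists and is realized by $\eilmofree{\MD}(G(A))$. Concretely, the dual of~\eqref{eq:free-coequalizer} uses the map $\eilmofree{\MD}(\rho_A) \circ \delta^{\MD}_{G(A)}$ into $\eilmofree{\MD}(G(\MC(A)))$ and the parallel pair built from $\delta^{\MD}$, $\eilmofree{\MD}(\rho_{\MC(A)})$, and $\eilmofree{\MD}(G(\delta^{\MC}))$; the Kleisli-law axioms then make $\eilmofree{\MD}(\rho_A) \circ \delta^{\MD}$ a split monomorphism that equalizes this pair. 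Naturality of the resulting isomorphism is then established exactly as in the proof of Theorem~\ref{thm:free-construction}, by chasing a cube and invoking uniqueness of equalizer fill-in morphisms.

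The main obstacle is not mathematical content but bookkeeping: consistently reversing every arrow, so that $u : W_\alpha \to G(A)$ now emanates \emph{from} the universal object, ensuring the correct Kleisli-law naturality condition (with respect to $\epsilon^{\MC}$ rather than $\eta^{\MS}$), and keeping straight which of $\delta^{\MD}$ and $\rho$ plays the role that $\mu^{\MT}$ and $\lambda$ played in the monadic proofs. Once this translation dictionary is fixed, each calculation transcribes directly.
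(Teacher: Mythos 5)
Your proposal is correct and matches the paper exactly: the paper gives no proof beyond asserting that the theorem is obtained "by formally dualizing" Theorems~\ref{thm:functorial-universal-bimorphisms} and~\ref{thm:free-construction}, which is precisely the strategy you carry out (and you supply more of the bookkeeping, such as the naturality condition shifting to $\epsilon^{\MC}$ and the split monomorphism $\eilmofree{\MD}(\rho_A)\circ\delta^{\MD}$, than the paper does).
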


\section{Lifting Adjunctions via Bimorphisms}
We now prove a well-known adjoint lifting theorem~\cite[Theorem 2]{johnstone1975adjoint} also given in dual form in~\cite{keigher1975adjunctions}. 
Our aim is to make explicit the application of the theory of bimorphisms in the proof.

Firstly, note that by proposition~\ref{prop:em-kleisli-facts}, item~\ref{en:em-transpose}, if we have an adjunction~$L \dashv R$, then the transpose operation:
\begin{equation}
\label{eq:eilenberg-moore-kleisli-correspondence}
\begin{tikzpicture}[stringdiagram, scale=0.5]
\path coordinate[dot, label=right:$\rho$] (rho)
+(0,1.5) coordinate[label=above:$\MS$] (tl)
+(-1,-1.5) coordinate[label=below:$R$] (bl)
+(1,1.5) coordinate[label=above:$R$] (tr)
+(0,-1.5) coordinate[label=below:$\MT$] (br);
\begin{scope}[on background layer]
\fill[catd] (bl) -- (tr) -- ++(1,0) -- ++(0,-3) -- cycle;
\fill[catc] (tr) -- (bl) -- ++(-1,0) -- ++(0,3) -- cycle;
\end{scope}
\draw (tl) -- (br) (tr) -- (bl);
\end{tikzpicture}
\mapsto
\begin{tikzpicture}[stringdiagram, scale=0.5]
\path coordinate[dot, label=right:$\rho$] (rho)
+(-2,1.5) coordinate[label=above:$L$] (tl)
+(0,1.5) coordinate[label=above:$\MS$] (tr)
+(0,-1.5) coordinate[label=below:$\MT$] (bl)
+(2,-1.5) coordinate[label=below:$L$] (br)
+(-1,-1) coordinate[dot, label=above:$\epsilon$] (epsilon)
+(1,1) coordinate[dot,label=below:$\eta$] (eta);
\draw (tr) -- (bl) (tl) to[out=-90, in=180] (epsilon.center) to[out=0, in=-135] (rho.center) to[out=45,in=180] (eta.center) to[out=0, in=90] (br);
\begin{scope}[on background layer]
\fill[catc] (tl) to[out=-90, in=180] (epsilon.center) to[out=0, in=-135] (rho.center) to[out=45,in=180] (eta.center) to[out=0, in=90] (br) -- ++(1,0) -- ++(0,3) -- cycle;
\fill[catd] (tl) to[out=-90, in=180] (epsilon.center) to[out=0, in=-135] (rho.center) to[out=45,in=180] (eta.center) to[out=0, in=90] (br) -- ++(-5,0) -- ++(0,3) -- cycle;
\end{scope}
\end{tikzpicture}
\end{equation}
yields a bijection between natural transformations of type~$\MS \circ R \nt R \circ \MT$ and those of type~$L \circ \MS \nt \MT \circ L$. Furthermore, it is easy to check this restricts to a bijection between Eilenberg-Moore laws and Kleisli laws of the corresponding types.

Assuming an~$L \dashv R$, and an Eilenberg-Moore law~$\rho : \MS \circ R \nt R \circ \MT$, let~$\lambda : L \circ \MS \nt \MT \circ L$ be defined by the mapping~\eqref{eq:eilenberg-moore-kleisli-correspondence}. We then observe, writing~$\eta$ and~$\epsilon$ for the unit and counit of the adjunction, that:
\begin{equation*}
    \label{eq:morphism-bimorphism-correspondence}
    \begin{tikzpicture}[stringdiagram, scale=0.5]
    \path coordinate[dot, label=right:$\alpha$] (alpha)
    +(0,1.5) coordinate[label=above:$A$] (tr)
    +(-2,1.5) coordinate[label=above:$\MS$] (tl)
    ++(0,-1) coordinate[dot, label=right:$h$] (h)
    +(0,-1.5) coordinate[label=below:$B$] (br)
    +(-2,-1.5) coordinate[label=below:$R$] (bl);
    \draw 
    (tr) -- (br) 
    (tl) to[out=-90, in=180] (alpha.center)
    (bl) to[out=90, in=180] (h.center);
    \begin{scope}[on background layer]
    \fill[catterm] (br) rectangle($(tr) + (1,0)$);
    \fill[catc] (br) rectangle ($(tl) + (-1,0)$);
    \fill[catd] (bl) to[out=90, in=180] (h.center) -- (br) -- cycle;
    \end{scope}
    \end{tikzpicture}
    \;=\;
    \begin{tikzpicture}[stringdiagram, scale=0.5]
    \path coordinate[dot, label=right:$h$] (h)
    +(0,1) coordinate[label=above:$A$] (tr)
    +(-2,1) coordinate[label=above:$\MS$] (tl)
    ++(0,-2) coordinate[dot, label=right:$\beta$] (beta)
    +(0,-1) coordinate[label=below:$B$] (br)
    +(-2,-1) coordinate[label=below:$R$] (bl);
    \draw (tr) -- (br);
    \draw[name path=downcurv] (h.center) to[out=180, in=90] (bl);
    \draw[name path=upcurv] (beta) to[out=180, in=-90] (tl);
    \path[name intersections={of=downcurv and upcurv}]
    coordinate[dot, label=180:$\rho$] (rho) at (intersection-1);
    \begin{scope}[on background layer]
    \fill[catterm] (br) rectangle($(tr) + (1,0)$);
    \fill[catc] (br) rectangle ($(tl) + (-1,0)$);
    \fill[catd] (h.center) to[out=180, in=90] (bl) -- (br) -- cycle;
    \end{scope}
    \end{tikzpicture}
    \quad\Leftrightarrow\quad
    \begin{tikzpicture}[stringdiagram, scale=0.5]
    \path coordinate[dot, label=right:$\alpha$] (alpha)
    +(0,1.5) coordinate[label=above:$A$] (tr)
    +(-2,1.5) coordinate[label=above:$\MS$] (tl)
    +(-3,1.5) coordinate[label=above:$L$] (tll)
    ++(0,-1) coordinate[dot, label=right:$h$] (h)
    +(-1,-1) coordinate[dot, label=above:$\epsilon$] (epsilon)
    +(0,-1.5) coordinate[label=below:$B$] (br);
    \draw 
    (tr) -- (br) 
    (tl) to[out=-90, in=180] (alpha.center)
    (tll) to[out=-90, in=180] (epsilon.center) to[out=0, in=-135] (h.center);
    \begin{scope}[on background layer]
    \fill[catterm] (br) rectangle($(tr) + (1,0)$);
    \fill[catd] (br) rectangle ($(tll) + (-1,0)$);
    \fill[catc] (tll) to[out=-90, in=180] (epsilon.center) to[out=0, in=-135] (h.center) -- (tr) -- cycle;
    \end{scope}
    \end{tikzpicture}
    \;=\;
    \begin{tikzpicture}[stringdiagram, scale=0.5]
    \path coordinate[dot, label=right:$h$] (h)
    +(-1,-1) coordinate[dot, label=above:$\epsilon$] (epsilon')
    +(-2,0) coordinate[dot, label=below:$\eta$] (eta)
    +(0,1) coordinate[label=above:$A$] (tr)
    +(-4,1) coordinate[label=above:$\MS$] (tl)
    +(-5,1) coordinate[label=above:$L$] (tll)
    ++(0,-2) coordinate[dot, label=right:$\beta$] (beta)
    +(-3,0) coordinate[dot, label=below:$\epsilon$] (epsilon)
    +(0,-1) coordinate[label=below:$B$] (br)
    +(-2,-1) coordinate[label=below:$R$] (bl);
    \draw (tr) -- (br);
    \draw[name path=downcurv] (h.center) to[out=180, in=0] (epsilon'.center) to[out=180, in=0] (eta.center) to[out=180, in=0] (epsilon.center) to[out=180, in=-90] (tll);
    \draw[name path=upcurv] (beta) to[out=180, in=-90] (tl);
    \path[name intersections={of=downcurv and upcurv}]
    coordinate[dot, label=180:$\rho$] (rho) at (intersection-1);
    \begin{scope}[on background layer]
    \fill[catterm] (br) rectangle($(tr) + (1,0)$);
    \fill[catd] (br) rectangle ($(tll) + (-1,0)$);
    \fill[catc] (h.center) to[out=180, in=0] (epsilon'.center) to[out=180, in=0] (eta.center) to[out=180, in=0] (epsilon.center) to[out=180, in=-90] (tll) -- (tr) -- cycle;
    \end{scope}
    \end{tikzpicture}
\end{equation*}
By the snake equation for the unit an counit of the adjunction. This establishes that~$h : A \to R(B)$ is a~right $\rho$-morphism if and only if its transpose $L(h)\circ \epsilon$ of type $L(A) \to B$ is a~left $\lambda$-morphism. Therefore we have a bijection:
\begin{prooftree}
    \AxiomC{$\alpha \rightarrow \emf{R}{\rho}(\beta)$}
\doubleLine
\UnaryInfC{$\alpha \klar{\lambda} \beta$}
\end{prooftree}
where~$\emf{R}{\rho}$ denotes the functor induced by the Eilenberg-Moore law~$\rho$.

If~$\emc{\MT}{\CD}$ has coequalizers of reflexive pairs, then the universal property of the classifying
objects gives a bijection:
\begin{prooftree}
\AxiomC{$\alpha \klar{\lambda} \beta$}
\doubleLine
    \UnaryInfC{$\emklf{L}{\lambda}(\alpha) \rightarrow \beta$}
\end{prooftree}
Combining these two bijections, we have show that~$\emklf{L}{\lambda} \dashv \emf{R}{\rho}$.

\begin{example}[Adjoints to functors induced by monad morphisms]
    It is well-known that a monad morphism~$\sigma : \MS \nt \MT$ can be viewed as an Eilenberg-Moore law~$\MS \circ \Id \nt \Id \circ \MT$, and trivially~$\Id_{\CC} \dashv \Id_{\CC}$. Therefore, we can apply the previous result if~$\emc{\MT}{\CC}$ has coequalizers of reflexive pairs. The Eilenberg-Moore law induces a functor~$\emf{\Id}{\sigma} : \emc{\MT}{\CC} \rightarrow \emc{\MS}{\CC}$, which has a left adjoint~$\emklf{\Id}{\sigma} : \emc{\MS}{\CC} \rightarrow \emc{\MT}{\CC}$.
\end{example}

We also note that if we have an invertible Eilenberg-Moore~$\lambda^{-1} : \MT \circ L \nt L \circ \MS$, then by proposition~\ref{prop:em-kleisli-facts}, item~\ref{en:em-inverse}, $\lambda : L \circ \MS \nt \MT \circ L$ is a Kleisli law,
and by item~\ref{en:em-transpose} its transpose~$\rho : \MS \circ R \nt R \circ \MT$ is an Eilenberg-Moore law. We can then
apply the previous result to lift this adjunction to the Eilenberg-Moore categories. This is~\cite[Theorem 4]{johnstone1975adjoint}.

\bibliographystyle{alpha}
\bibliography{bimorphisms}

\begin{thebibliography}{ADW17}

\bibitem[ADW17]{AbramskyDW17}
Samson Abramsky, Anuj Dawar, and Pengming Wang.
\newblock The pebbling comonad in finite model theory.
\newblock In {\em 32nd Annual {ACM/IEEE} Symposium on Logic in Computer
  Science, {LICS} 2017, Reykjavik, Iceland, June 20-23, 2017}, pages 1--12.
  {IEEE} Computer Society, 2017.

\bibitem[Alu21]{aluffi2021algebra}
Paolo Aluffi.
\newblock {\em Algebra: chapter 0}, volume 104.
\newblock American Mathematical Soc., 2021.

\bibitem[App65]{appelgate1965acyclic}
Harry~Wesley Appelgate.
\newblock {\em Acyclic models and resolvent functors}.
\newblock PhD thesis, Columbia University, 1965.

\bibitem[AS21]{abramsky2021relating}
Samson Abramsky and Nihil Shah.
\newblock Relating structure and power: Comonadic semantics for computational
  resources.
\newblock {\em Journal of Logic and Computation}, 31(6):1390--1428, 2021.

\bibitem[Bec69]{beck1969distributive}
Jon Beck.
\newblock Distributive laws.
\newblock In {\em Seminar on triples and categorical homology theory},
  volume~80 of {\em Lecture Notes in Mathematics}, pages 119--140. Springer,
  1969.

\bibitem[Bor94]{borceux1994handbook}
Francis Borceux.
\newblock {\em Handbook of Categorical Algebra: Volume 2, Categories and
  Structures}, volume~2.
\newblock Cambridge University Press, 1994.

\bibitem[BW00]{barr2000toposes}
Michael Barr and Charles Wells.
\newblock {\em Toposes, triples, and theories}.
\newblock Reprints in Theory and Applications of Categories, 2000.
\newblock Originally published by Springer-Verlag.

\bibitem[CE12]{courcelle2012graph}
Bruno Courcelle and Joost Engelfriet.
\newblock {\em Graph structure and monadic second-order logic: a
  language-theoretic approach}, volume 138.
\newblock Cambridge University Press, 2012.

\bibitem[CMR00]{courcelle2000linear}
Bruno Courcelle, Johann~A Makowsky, and Udi Rotics.
\newblock Linear time solvable optimization problems on graphs of bounded
  clique-width.
\newblock {\em Theory of Computing Systems}, 33(2):125--150, 2000.

\bibitem[Cou90]{courcelle1990monadic}
Bruno Courcelle.
\newblock The monadic second-order logic of graphs. {I}. {R}ecognizable sets of
  finite graphs.
\newblock {\em Information and computation}, 85(1):12--75, 1990.

\bibitem[FV67]{feferman1967first}
Solomon Feferman and Robert~L. Vaught.
\newblock The first order properties of products of algebraic systems.
\newblock {\em Journal of Symbolic Logic}, 32(2):57--103, 1967.

\bibitem[Gur85]{gurevich1985monadic}
Yuri Gurevich.
\newblock Monadic second-order theories.
\newblock In J.~Barwise and S.~Feferman, editors, {\em Model-theoretic logics},
  pages 479--506. Cambridge University Press, 1985.

\bibitem[HM16a]{HinzeM16b}
Ralf Hinze and Dan Marsden.
\newblock Dragging proofs out of pictures.
\newblock In Sam Lindley, Conor McBride, Philip~W. Trinder, and Donald
  Sannella, editors, {\em A List of Successes That Can Change the World -
  Essays Dedicated to Philip Wadler on the Occasion of His 60th Birthday},
  volume 9600 of {\em Lecture Notes in Computer Science}, pages 152--168.
  Springer, 2016.

\bibitem[HM16b]{HinzeM16a}
Ralf Hinze and Dan Marsden.
\newblock Equational reasoning with lollipops, forks, cups, caps, snakes, and
  speedometers.
\newblock {\em J. Log. Algebraic Methods Program.}, 85(5):931--951, 2016.

\bibitem[Jac94]{jacobs1994semantics}
Bart Jacobs.
\newblock Semantics of weakening and contraction.
\newblock {\em Annals of pure and applied logic}, 69(1):73--106, 1994.

\bibitem[JMS22]{jaklmarsdenshah2022fvm}
Tom\'{a}\v{s} Jakl, Dan Marsden, and Nihil Shah.
\newblock A game comonadic account of {C}ourcelle and
  {F}eferman-{V}aught-{M}ostowski theorems.
\newblock In preparation, 2022.

\bibitem[Joh75]{johnstone1975adjoint}
Peter~T Johnstone.
\newblock Adjoint lifting theorems for categories of algebras.
\newblock {\em Bulletin of the London Mathematical Society}, 7(3):294--297,
  1975.

\bibitem[Kei75]{keigher1975adjunctions}
William Keigher.
\newblock Adjunctions and comonads in differential algebra.
\newblock {\em Pacific journal of Mathematics}, 59(1):99--112, 1975.

\bibitem[Koc70]{Kock1970}
Anders Kock.
\newblock Monads on symmetric monoidal closed categories.
\newblock {\em Archiv der Mathematik}, 21(1):1--10, 1970.

\bibitem[Koc71]{kock1971bilinearity}
Anders Kock.
\newblock Bilinearity and cartesian closed monads.
\newblock {\em Mathematica Scandinavica}, 29(2):161--174, 1971.

\bibitem[Koc72]{Kock1972}
Anders Kock.
\newblock Strong functors and monoidal monads.
\newblock {\em Archiv der Mathematik}, 23(1):113--120, 1972.

\bibitem[Mak04]{makowsky2004algorithmic}
Johann~A Makowsky.
\newblock Algorithmic uses of the {F}eferman--{V}aught theorem.
\newblock {\em Annals of Pure and Applied Logic}, 126(1-3):159--213, 2004.

\bibitem[ML13]{mac2013categories}
Saunders Mac~Lane.
\newblock {\em Categories for the working mathematician}, volume~5.
\newblock Springer Science \& Business Media, 2013.

\bibitem[MM07]{manes2007monad}
Ernie Manes and Philip Mulry.
\newblock Monad compositions. i: General constructions and recursive
  distributive laws.
\newblock {\em Theory and Applications of Categories}, 18:172--208, 2007.

\bibitem[Mos52]{mostowski1952direct}
Andrzej Mostowski.
\newblock On direct products of theories.
\newblock {\em The Journal of Symbolic Logic}, 17(1):1--31, 1952.

\bibitem[OS06]{oum2006approximating}
Sang-il Oum and Paul Seymour.
\newblock Approximating clique-width and branch-width.
\newblock {\em Journal of Combinatorial Theory, Series B}, 96(4):514--528,
  2006.

\bibitem[PT04]{pedicchio2004categorical}
Maria~Cristina Pedicchio and Walter Tholen.
\newblock {\em Categorical Foundations: Special Topics in Order, Topology,
  Algebra, and Sheaf Theory}.
\newblock Cambridge University Press, 2004.

\bibitem[Sea13]{seal2012tensors}
Gavin~J Seal.
\newblock Tensors, monads and actions.
\newblock {\em Theory and Applications of Categories}, 28:403--434, 2013.

\end{thebibliography}



\end{document}